\theoremstyle{plain}
\newtheorem{theorem}{Theorem}[section]
\newtheorem{lemma}[theorem]{Lemma}
\newtheorem{proposition}[theorem]{Propostion}
\newtheorem{corollary}[theorem]{Corollary}
\newtheorem*{corollary*}{Corollary}
\theoremstyle{definition}
\newtheorem{definition}[theorem]{Definition}
\newtheorem{example}[theorem]{Example}
\theoremstyle{remark}
\numberwithin{equation}{section}
\begin{document}
\def\Prod{\displaystyle \prod}
\def\Lim{\displaystyle \lim}
\def\Int{\displaystyle \int}
\def\Sum{\displaystyle \sum}
\def\Frac{\displaystyle \frac}
\def\Sup{\displaystyle \sup}
\def\Inf{\displaystyle \inf}
\def\Min{\displaystyle \otimes_{\mathrm{min}}}
\def\Max{\displaystyle \otimes_{\mathrm{max}}}
\def\R{\mathbb{R}}
\def\C{\mathbb{C}}
\def\D{\mathbb{D}}
\def\T{\mathbb{T}}
\def\N{\mathbb{N}}
\def\Q{\mathbb{Q}}
\def\F{\mathbb{F}}
\def\Z{\mathbb{Z}}
\def\H{\mathcal{H}}
\def\A{\mathcal{A}}
\def\I{\mathcal{I}}
\def\x{\tilde{x}}
\def\y{\tilde{y}}
\def\B{\mathbb{B}}
\def\L{\mathcal{L}}
\def\cQ{\mathcal{Q}}
\def\P{\mathcal{P}}
\def\K{\mathcal{K}}
\def\Cstar{\mathrm{C}^*}
\def\id{\text{id}}
\def\Mat{\text{Mat}}
\def\s{\K\otimes}

\title{Universal $\Cstar$-algebras with the Local Lifting Property}

\author{Kristin E. Courtney}
\address{Mathematical Institute, WWU M\"{u}nster, Einsteinstr. 62, 48149 M\"{u}nster, Germany}
\email{kcourtne@uni-muenster.de}
\subjclass[2010]{46L05}
\keywords{universal $\Cstar$-algebras, Local Lifting Property, conditionally projective maps, Pythagorean $\Cstar$-algebras}

\begin{abstract}
The Local Lifting Property (LLP) is a localized version of projectivity for completely positive maps between $\Cstar$-algebras.
   Outside of the nuclear case, very few $\Cstar$-algebras are known to have the LLP. 
   In this article, we show that the LLP holds for the algebraic contraction $\Cstar$-algebras introduced by Hadwin and further studied by Loring and Shulman.
   We also show that the universal Pythagorean $\Cstar$-algebras introduced by Brothier and Jones have the Lifting Property. 
\end{abstract}


\maketitle

\section{Introduction}
In 1993, Kirchberg introduced the Local Lifting Property (LLP) as a crucial component of his seminal work (\cite{Kir93}) on Connes' Embedding Problem. By his definition, a unital $\Cstar$-algebra $A$ has the Lifting Property (LP) if any completely positive contractive (cpc) map from $A$ into a $\Cstar$-algebra quotient admits a cpc lift. The LLP is a weakening of the LP where the lift is only locally defined. (See Definitions \ref{def liftable} and \ref{defLLP}.)

The Choi-Effros Lifting Theorem (\cite{CE76}) implies that all nuclear $\Cstar$-algebras have the LLP. Outside of nuclear examples, few $\Cstar$-algebras are known to have or to lack the LLP, and such examples are often closely tied to interesting and difficult problems. By proving that $\mathcal{B}(\H)$ fails to have the LLP when $\H$ is infinite dimensional (\cite{JP}), Junge and Pisier resolved a number of conjectures that had been proved to be equivalent by Kirchberg in \cite[Proposition 8.1]{Kir93}. Also, Thom's non-RF hyperlinear property (T) group (\cite{Tho10}) was a highly sought concrete example of a group whose full $\Cstar$-algebra failed the LLP. (Ozawa had already established the existence of groups lacking the LP in \cite{Oza04b}.) As for positive examples, Kirchberg showed in \cite{Kir94} that full group $\Cstar$-algebras for free groups have the LLP, which facilitated their role in his equivalent formulation of Connes' Embedding Problem (\cite[Proposition 8.1]{Kir93}). The LLP is known to be preserved under full free products (\cite{Pis96}), tensoring with nuclear $\Cstar$-algebras (\cite{Kir93}), and passing to relatively weakly injective $\Cstar$-subalgebras (\cite{Kir93}).
However, apart from ones obtained in these constructions, not many concrete examples are known. In fact, it was only very recently that Pisier provided the first example of a non-nuclear $\Cstar$-algebra that has both the LLP and Lance's Weak Expectation Property (\cite{Pis19}), solving a long-standing question implicit in much of Kirchberg's work from the 1990's. 

In this article, we prove that the LLP holds for two more families of $\Cstar$-algebras. 
The first is the family of $\Cstar$-algebras admitting a conditionally projective map (Definition \ref{ELP}) from $\C^N$ for some $N\geq 1$ (Theorem \ref{condproj}). It follows from a result of Loring and Shulman (\cite{LS14}) that, for any $C>0$ and non-constant polynomial $p\in \C[z]$ in one variable, the universal $\Cstar$-algebra for the relations $\|x\|\leq C$ and $p(x)=0$ is included in this family (Corollary \ref{polyLP}). The second is the family of Pythagorean $\Cstar$-algebras $\P_n$, which are universal $\Cstar$-algebras for the relation $\sum_{j=1}^n x_j^*x_j=1$ (Proposition \ref{Pn has LP}); here we actually prove that the LP holds. For the first family, the question of when $^*$-homomorphisms into $\Cstar$-algebra quotients lift to $^*$-homomorphisms has been 
extensively studied in \cite{OP89, Had95, Lor93b, Shu08, Shu12}, among other articles. The Pythagorean $\Cstar$-algebras for $n>1$ were introduced by Brothier and Jones in \cite{BJ} to study so-called Pythagorean representations of Thompsons' groups $F_n$. 

Universal $\Cstar$-algebras can be identified with their defining $^*$-algebraic relations in the sense that, for any operator(s) satisfying the defining relations, there is a natural surjective $^*$-homomorphism from the universal $\Cstar$-algebra onto the $\Cstar$-algebra generated by the operator(s). Some universal $\Cstar$-algebras have nice concrete descriptions; a classical example is the universal $\Cstar$-algebra generated by an isometry, which Coburn showed can be identified with the Toeplitz algebra. 
However, many important and interesting examples of $\Cstar$-algebras, including most of those covered by Corollary \ref{polyLP} and Proposition \ref{Pn has LP}, have no concrete ``Coburn" descriptions. 
Given that cpc maps often do not preserve these defining $^*$-algebraic relations, it is surprising that we can say much about cpc maps coming from these $\Cstar$-algebras. Then again, a cpc map can be dilated to a $^*$-homomorphism via Stinespring's Dilation Theorem, and operators can be dilated to satisfy certain $^*$-algebraic relations, for example Halmos' $2\times 2$ dilation of a contractive operator to a unitary. These two types of dilations underly the proofs of Theorem \ref{condproj} and Proposition \ref{Pn has LP}, respectively. 

In Section \ref{preliminaries}, we establish basic notation and describe the relationships between various lifting properties for $\Cstar$-algebras. In Section \ref{Cond Proj and LLP}, we flesh out a characterization of the LLP (Proposition \ref{LLP}) that comes from Kirchberg's proof of \cite[Lemma 3.3]{Kir94} combined with a characterization of the LLP due to Ozawa (\cite[Proposition 3.13]{Oza04}). Using this, we prove in Theorem \ref{condproj} that any $\Cstar$-algebra with a conditionally projective map $\C^N\to A$ has the LLP. In particular, the LLP holds for the aforementioned universal $\Cstar$-algebras for the relations $\|x\|\leq C$ and $p(x)=0$ (Corollary \ref{polyLP}). Section \ref{Pn} is devoted to proving the LP for Pythagorean $\Cstar$-algebras. As a consequence, we show that the semigroup Ext$(\P_n)$ is a group for each $n>1$ (Corollary \ref{Ext}). Finally, in Section \ref{CEP} we discuss connections between certain universal $\Cstar$-algebras and Lance's Weak Expectation Property (Definition \ref{WEP}). 
\\

\noindent {\bf Acknowledgments.} The author would like to thank James Gabe and David Sherman for many enlightening and fruitful discussions on math and exposition. She would also like to thank Wilhelm Winter for comments on an earlier draft. This research was partially supported by the Deutsche Forschungsgemeinschaft (DFG, German Research Foundation) under Germany's Excellence Strategy -- EXC 2044 -- 390685587, Mathematics M\"{u}nster -- Dynamics -- Geometry -- Structure, and also partially supported by ERC Advanced Grant 834267 - AMAREC.

\section{Preliminaries}\label{preliminaries}

\subsection{Universal $\Cstar$-algebras}
 Let $\mathcal{G}=\{x_i\}_{i\in \mathcal{I}}$ be a set and $\mathcal{R}$ be a set of relations of the form 
 $$\|p(x_{i_1},...,x_{i_n})\|\leq r$$ where $0<n<\infty$, $r\in [0,\infty]$ and $p$ is a noncommutative $^*$-polynomial.
We define the universal $\Cstar$-algebra $\Cstar \langle \mathcal{G}: \mathcal{R}\rangle$ for the relations $\mathcal{R}$ to be the unique $\Cstar$-algebra such that, for any Hilbert space $\H$ and set of operators $\{T_i\}_{i\in \mathcal{I}}\subset \mathcal{B}(\H)$ satisfying $\mathcal{R}$, the assignments $x_i\mapsto T_i$, $i\in \mathcal{I}$ induce a surjective $^*$-homomorphism $\Cstar \langle \mathcal{G}: \mathcal{R}\rangle \to \Cstar (\{T_i\}_{i\in\mathcal{I}})$. 
As long as there exist operators $\{T_i\}_{i\in \mathcal{I}}$ on some Hilbert space that satisfy $\mathcal{R}$, and $\mathcal{R}$ enforces norm bounds on the generators, we are guaranteed that $\Cstar \langle \mathcal{G}: \mathcal{R}\rangle$ exists (see \cite[Theorem 3.1.1]{Lor97}). 
Universal $\Cstar$-algebras can be defined in more generality, but these definitions suffice for the content of this article. In fact, the universal $\Cstar$-algebras we consider here have finitely many generators and relations. The reader is directed to \cite[Chapter 3]{Lor97} or \cite{Bla85} for a more thourough introduction.
 
 The relations can force $\Cstar \langle \mathcal{G}: \mathcal{R}\rangle$ to be unital. (For example, for $\Cstar \langle x : x^*x=1=xx^*\rangle = C(\mathbb{T})$, we implicitly assume $1\in \mathcal{G}$ and the relations making it a unit are in $\mathcal{R}$). To unitize a non-unital universal $\Cstar$-algebra $\Cstar \langle \mathcal{G}: \mathcal{R}\rangle$, it suffices to add $1$ to $\mathcal{G}$ along with appropriate relations (see Section 4 of \cite{CShe}). For a $\Cstar$-algebra $A$, we denote its unitization by $\tilde{A}$, and we take the convention that $\tilde{A}=A$ when $A$ is unital. Because this notation is cumbersome for $\Cstar \langle \mathcal{G}: \mathcal{R}\rangle$, we will denote the unitization of $\Cstar \langle \mathcal{G}:\mathcal{R}\rangle$ by $\Cstar _u\langle \mathcal{G}:\mathcal{R}\rangle$.

\begin{example}
The following are universal $\Cstar$-algebras that will feature in this article.
\begin{enumerate}
    \item The universal unital $\Cstar$-algebra associated to a single polynomial relation, 
    $$\Cstar _u\langle x: \|x\|\leq C,\ p(x)=0\rangle$$ where $C>0$ and $p\in \C[z]$ is a non-constant polynomial that has at least one root of modulus strictly less than $C$.\footnote{We add the requirement that there exists a $\lambda\in \C$ such that $|\lambda|<C$ and $p(\lambda)=0$ in order to guarantee there is at least one representation. See \cite[Remark 5.13]{CShe}.}
    \item The universal (unital) Pythagorean $\Cstar$-algebras for $n\geq 1$,
    $$\P_n:=\Cstar \langle x_1,...,x_n: \textstyle{\sum} x_i^*x_i=1\rangle.$$
    For $n=1$, this is just the Toeplitz algebra by Coburn's Theorem. For $n>1$, these were introduced and studied in \cite{BJ}. 
    \item The universal unital $\Cstar$-algebra generated by a contraction,
    $$\A:=C_u^*\langle x : \|x\|\leq 1\rangle.$$
    This algebra is basic and ubiquitous in the literature, though not always appearing by name. For an extensive study, see \cite{CShe}. 
    \item The 
    universal $\Cstar$-algebra generated by a partial isometry,
    $$\P:=\Cstar \langle x: xx^*x=x\rangle.$$ 
    The defining relation of this algebra is of similar importance. For a study on the $\Cstar$-algebra itself, see \cite{BN12}. 
    \item The universal unital $\Cstar$-algebra for an $n$-row contraction for $n\geq 1$,
    $$\mathcal{R}_n:=\Cstar _u\langle y_1,...,y_n: \|\textstyle{\sum} y_iy_i^*\|\leq 1\rangle.$$
    Row contractions have been of particular interest in generalizations of the commutant lifting theorem and von Neumann's inequality e.g. \cite{Bun84,  Har17, Pop89}. Some facts about the $\Cstar$-algebras themselves were given in \cite{CShe}. 
\end{enumerate}
\end{example}
For most of these $\Cstar$-algebras, there is no version of Coburn's theorem giving a nice concrete realization. Perhaps the only exception from the above list (aside from the Toeplitz algebra) is $$\Cstar \langle x: \|x\|\leq 1, x^2=0\rangle\simeq C_0((0,1], \mathbb{M}_2(\C)),$$ which Loring showed in \cite{Lor93b}.

\subsection{Lifting ucp maps}\label{Prelim: LP}
For $\Cstar$-algebras $A$ and $B$ with $A$ unital and $E\subset A$ an operator system, we say a linear map $\phi:E\to B$ is {\it positive} if it maps positive elements to positive elements. It is called {\it completely positive} (cp) it remains positive under matrix amplifications $\phi^{(n)}:M_n(E)\to M_n(B)$. It is called {\it completely positive contractive} (cpc) if it (and its matrix amplifications) are all norm non-increasing. This is automatic in the case where $B$ and $\phi$ are unital and $\phi$ is completely positive, in which case we call $\phi$ {\it unital completely positive} (ucp). See \cite{Pau02} for an introduction to these maps and their relevant properties. 

\begin{definition}\label{def liftable}
Given $\Cstar$-algebras $A$ and $B$ with $A$ unital and $J\subset B$ a closed two-sided ideal with quotient map $\pi:B\to B/J$, we say a cpc map $\phi:A\to B$ is {\it liftable} if there exists a cpc map $\psi:A\to B$ such that $\phi=\pi\psi$. We say $\phi$ is {\it locally liftable} if for any finite-dimensional operator system $E\subset A$, there is cpc map $\psi:E\to B$ so that $\phi|_E=\pi\psi$. 
\end{definition}

\begin{definition}\label{defLLP}
A unital $\Cstar$-algebra $A$ has the {\it Lifting Property} (LP) (resp. {\it Local Lifting Property} (LLP)) if for every $\Cstar$-algebra $B$ with closed two-sided ideal $J\subset B$, every cpc map $\phi:A\to B/J$ is liftable (resp. locally liftable). 
A non-unital $\Cstar$-algebra $A$ has the (L)LP if and only if its unitization has the (L)LP. 
\end{definition}

In fact, to know a unital $\Cstar$-algebra $A$ has the LLP, it suffices to know that any ucp map from $A$ into a quotient $\Cstar$-algebra is locally liftable; when $A$ is separable, it has the LP provided that every ucp map into a quotient $\Cstar$-algebra is liftable. See \cite[Proposition 13.1.2]{BO} for a proof. 
In \cite{Oza01}, Ozawa gives a particularly useful characterization of the LLP for unital separable $\Cstar$-algebras. The actual proposition we want comes implicitly from \cite[Proposition 2.9]{Oza01} via \cite[Remark 2.11]{Oza01}. He later states the result explicitly, albeit without proof in \cite[Proposition 3.13]{Oza04}, and so this is the citation we use.

\begin{proposition}[{{\cite[Proposition 3.13]{Oza04}}}]\label{Ozawa}
A separable unital $\Cstar$-algebra $A$ has the LLP if and only if any ucp map from $A$ into the Calkin algebra $\mathcal{B}(\ell^2)/\K(\ell^2)$ admits a ucp lift. 
\end{proposition}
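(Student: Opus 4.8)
The statement is an equivalence, and in both directions the Calkin algebra $Q:=\mathcal{B}(\ell^2)/\K(\ell^2)$ is singled out for a concrete structural reason: the ideal $\K(\ell^2)$ is $\sigma$-unital, its multiplier algebra is all of $\mathcal{B}(\ell^2)$, and $\mathcal{B}(\ell^2)$ is injective. The plan is to prove the easy direction --- LLP implies the Calkin condition --- by upgrading local lifts into $Q$ to a single global lift, and the substantive direction --- the Calkin condition implies LLP --- by showing that $Q$ is a ``universal'' target for detecting local liftability.

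For ($\Rightarrow$): assume $A$ has the LLP and let $\phi:A\to Q$ be ucp, with quotient map $\pi:\mathcal{B}(\ell^2)\to Q$. By the LLP, $\phi$ is locally liftable, so on each finite-dimensional operator system $E\subseteq A$ with $1_A\in E$ there is a ucp lift $\psi_E:E\to\mathcal{B}(\ell^2)$, which can be taken unital since $1_A\in E$. Because $\mathcal{B}(\ell^2)$ is injective, Arveson's extension theorem extends $\psi_E$ to a ucp map $\widehat{\psi}_E:A\to\mathcal{B}(\ell^2)$ still lifting $\phi$ on $E$. Using separability of $A$, I would fix an increasing sequence $E_1\subseteq E_2\subseteq\cdots$ of such systems with dense union and corresponding ucp maps $\widehat{\psi}_n:A\to\mathcal{B}(\ell^2)$, and then splice them along a quasicentral approximate unit $(f_n)$ of $\K(\ell^2)$ --- which exists since $\K(\ell^2)$ is $\sigma$-unital --- via a telescoping combination $\sum_n (f_n-f_{n-1})^{1/2}\,\widehat{\psi}_n(\,\cdot\,)\,(f_n-f_{n-1})^{1/2}$, exactly as in the proof of the Choi--Effros lifting theorem. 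What makes this produce an honest lift rather than an approximate one is that the $\widehat{\psi}_n$ are defined on all of $A$ and eventually agree, modulo $\K(\ell^2)$, on each $E_m$, and that the series stays inside $\mathcal{B}(\ell^2)=M(\K(\ell^2))$; the limit is then a ucp map $\psi:A\to\mathcal{B}(\ell^2)$ with $\pi\psi=\phi$.

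For ($\Leftarrow$): assume every ucp map $A\to Q$ has a ucp lift, let $\phi:A\to B/J$ be ucp with quotient map $\pi_J$, and let $E\subseteq A$ be a finite-dimensional operator system with $1_A\in E$; I want a cpc lift of $\phi|_E$ into $B$. First I would reduce to $B$ separable, replacing $B$ by the unital $\Cstar$-subalgebra generated by a countable lift of a dense subset of $\phi(A)$ and $J$ by its intersection with that subalgebra; then $J$ becomes $\sigma$-unital, $\phi(A)$ is dense in $B/J$, and a cpc lift into the smaller algebra is one into $B$. Since $B/J$ is now separable and unital it embeds unitally into $Q$ --- compose the quotient map with an infinite ampliation of a faithful representation --- so fixing such an embedding $\iota$, the map $\iota\phi:A\to Q$ is ucp and by hypothesis lifts to a ucp map $\Psi:A\to\mathcal{B}(\ell^2)$ with $\pi\Psi=\iota\phi$. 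The decisive step is then to convert $\Psi$ into a cpc lift of $\phi|_E$ that takes values in $B$ itself. I expect essentially all the difficulty to lie here: a priori $\Psi$ only realizes $\phi$ through the Calkin extension $0\to\K(\ell^2)\to\mathcal{B}(\ell^2)\to Q\to0$, which is unrelated to $0\to J\to B\to B/J\to0$, and a cpc lift of $\phi|_E$ through one extension of $B/J$ need not descend through another. Bridging the two seems to require genuine structural input --- for instance forming the pullback of $B\to B/J\hookleftarrow\pi^{-1}(\iota(B/J))$ and running a dilation/perturbation argument that exploits $\dim E<\infty$ and separability of $B$, or else routing the whole implication through Kirchberg's equivalence ``$A$ has the LLP $\iff A\otimes_{\min}\mathcal{B}(\ell^2)=A\otimes_{\max}\mathcal{B}(\ell^2)$'' and deducing the tensorial identity from the Calkin-lifting hypothesis via the correspondence between states on the maximal tensor product and commuting pairs of representations.

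In summary, ($\Rightarrow$) is soft once one invokes Arveson's extension theorem and the Choi--Effros splicing technique, the only special features needed being $\sigma$-unitality of $\K(\ell^2)$ and $M(\K(\ell^2))=\mathcal{B}(\ell^2)$. The heart of the matter is the transfer step in ($\Leftarrow$) --- that liftability into the single quotient $Q$ already detects local liftability into arbitrary quotients --- which is Ozawa's contribution (\cite{Oza01}, restated in \cite[Proposition 3.13]{Oza04}) and rests on Kirchberg's methods; this is the step I expect to absorb all the real work.
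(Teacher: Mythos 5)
The paper does not prove this proposition; it is imported verbatim from Ozawa (\cite[Proposition 3.13]{Oza04}, with the actual argument living in \cite[Proposition 2.9 and Remark 2.11]{Oza01}), so there is no in-paper proof to measure you against. Judged on its own terms, your forward direction is essentially correct and standard: local lifts on an increasing exhaustion $E_1\subset E_2\subset\cdots$, corrected to be unital and extended to all of $A$ by Arveson's extension theorem (this is exactly where injectivity of $\mathcal{B}(\ell^2)$ is needed), give liftable ucp maps $\pi\widehat{\psi}_n$ converging point-norm to $\phi$; one then concludes either by the quasicentral splicing you describe or, more cleanly, by quoting the point-norm closedness of the set of liftable ucp maps out of a separable domain when the ideal is $\sigma$-unital (\cite[Appendix C / Chapter 13]{BO}). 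The details you wave at (recursive choice of the $f_n$ against the $\widehat{\psi}_n$) are routine.

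The backward direction, however, is not a proof: you correctly locate the difficulty and then stop. The ``transfer'' strategy of embedding a separabilized $B/J$ unitally into $\mathcal{B}(\ell^2)/\K(\ell^2)$ and lifting $\iota\phi$ does not close up, for the reason you yourself give --- the resulting $\Psi$ factors $\phi$ through the Calkin extension, which bears no a priori relation to $0\to J\to B\to B/J\to 0$, and there is no general device for pushing a lift across to an unrelated extension of the same quotient (if there were, every separable $\Cstar$-algebra would have the LLP). The route that actually works is the second one you mention: show that Calkin-liftability of all ucp maps forces $A\Max \mathcal{B}(\ell^2)=A\Min\mathcal{B}(\ell^2)$, and then invoke Kirchberg's equivalence of that tensorial identity with the LLP (\cite[Proposition 1.1]{Kir94}, restated as Proposition \ref{Kir1.1} in this paper); but both halves of that route --- extracting the tensor identity from the lifting hypothesis, and Kirchberg's implication from the tensor identity back to local liftability (which itself runs through the Effros--Haagerup lifting criterion) --- are substantial arguments that your proposal names without carrying out. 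So the proposal establishes one implication and leaves the other as an identified but unfilled gap.
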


Perhaps the most important examples of $\Cstar$-algebras with the (local) lifting property were established by Kirchberg in \cite{Kir94}. 

\begin{lemma}[{{\cite[Lemma 3.3]{Kir94}}}]\label{ArvesonLP}
Let $\F$ be any non-abelian\footnote{The abelian case also holds but is covered by the Choi-Effros lifting theorem.} free group. Then $\Cstar (\F)$ has the LLP. If $\F$ is countably generated, then $\Cstar (\F)$ has the LP. 
\end{lemma}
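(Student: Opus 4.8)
The plan is to reduce the lemma to two facts that are already available: nuclear $\Cstar$-algebras have the LLP (the Choi--Effros Lifting Theorem, \cite{CE76}), and the LLP is preserved under full free products (\cite{Pis96}). The structural input is that the full group $\Cstar$-algebra construction sends free products of groups to full (unital) free products of $\Cstar$-algebras: if $\F$ is the free group on a set $S$, then $\Cstar(\F)$ is the full free product of copies of $C(\T)=\Cstar(\Z)$ indexed by $S$. Since $C(\T)$ is abelian, hence nuclear, it has the LLP, and therefore so does $\Cstar(\F)$ by the free-product stability result, which holds for free products of arbitrarily many factors. This proves the first assertion for an arbitrary (not necessarily countably generated) free group.

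For the second assertion, assume $\F$ is countably generated, so that $\Cstar(\F)$ is separable. By \cite[Proposition 13.1.2]{BO} it is enough to lift unital completely positive maps. I would then upgrade the LLP to the LP by the gluing technique from the proof of the Choi--Effros theorem: given a ucp map $\phi\colon\Cstar(\F)\to B/J$, choose finite-dimensional operator systems $E_1\subseteq E_2\subseteq\cdots$ with $\overline{\bigcup_k E_k}=\Cstar(\F)$, use the LLP to select ucp lifts $\psi_k\colon E_k\to B$ of $\phi|_{E_k}$, and splice the $\psi_k$ together into a single unital completely positive map $\psi\colon\Cstar(\F)\to B$ with $\pi\psi=\phi$ by the usual telescoping argument driven by a quasicentral approximate unit of $J$. (For finitely generated $\F$ one could instead invoke Boca's theorem that the LP itself, not just the LLP, passes to full free products.) Separability is used only here, to furnish the countable cofinal family $(E_k)$; this is why the LP is claimed only in the countably generated case.

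The step I expect to be the genuine obstacle is the stability of the LLP under full free products, which I am importing wholesale from \cite{Pis96} (Kirchberg's original argument in \cite{Kir94} is organized differently but meets the same difficulty). It is worth pointing out why a more hands-on route via Ozawa's characterization (Proposition \ref{Ozawa}) does not by itself close the argument. Given a ucp map $\phi\colon\Cstar(\F_n)\to\mathcal{B}(\ell^2)/\K(\ell^2)$, one can lift each contraction $\phi(u_i)$ to a contraction $T_i\in\mathcal{B}(\ell^2)$ and, applying Halmos' $2\times 2$ unitary dilation to the $T_i$ and then compressing to a corner, obtain a ucp map $\psi\colon\Cstar(\F_n)\to\mathcal{B}(\ell^2)$ with $\psi(u_i)=T_i$; but $\pi\psi$ and $\phi$ then agree only on the unitary generators, not on their products, because ucp extensions are highly non-unique. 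Manufacturing a lift that is faithful to $\phi$ on all of $\Cstar(\F_n)$---equivalently, lifting a compatible family of Stinespring dilations rather than just a tuple of contractions---is precisely what requires the free-product machinery, and is where the real work lies.
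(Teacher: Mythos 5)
The paper offers no proof of this lemma---it is imported verbatim from \cite{Kir94}---so your proposal is measured against Kirchberg's argument. The LLP half of your proposal is acceptable: $\Cstar(\F)$ is the unital full free product of copies of $C(\T)=\Cstar(\Z)$, each factor is nuclear, and stability of the LLP under full free products is exactly what \cite{Pis96} supplies (this is essentially how Pisier reorganizes Kirchberg's theorem, so the citation is legitimate and not circular, though all the real work is outsourced to it). The LP half, however, has a genuine gap. The telescoping argument with a quasicentral approximate unit proves that the set of \emph{globally defined} liftable cpc maps $A\to B/J$ is point-norm closed: to run it you must feed it maps defined on all of $A$, each already admitting a cpc lift into $B$. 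The LLP hands you lifts $\psi_k\colon E_k\to B$ defined only on the finite-dimensional operator systems $E_k$; the expression $\sum_k f_k\psi_k(a)f_k$ is meaningless for $a\notin E_k$, and you cannot extend $\psi_k$ to a cpc map $A\to B$ because $B$ is not injective. (Your scheme does work for the single quotient $\mathcal{B}(\ell^2)\to Q(\K)$, where injectivity of $\mathcal{B}(\ell^2)$ lets you Arveson-extend the local lifts---that is the easy direction of Proposition \ref{Ozawa}---but not for a general $B$.) Whether the LLP implies the LP for separable $\Cstar$-algebras is a well-known open question precisely because no such soft upgrade is available; if your splicing worked, it would settle it. The Boca fallback does not close the gap either: Boca's theorem produces \emph{some} ucp map on a free product extending prescribed ucp maps on the factors, but a ucp map on $\Cstar(\F)$ is not determined by its restrictions to the factors, so the assembled map need not agree with $\phi$ after passing to the quotient.

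Kirchberg's actual proof of the LP runs in the opposite direction to your final paragraph's diagnosis. One first applies the Kasparov--Stinespring dilation to replace the ucp map $\phi\colon \Cstar(\F)\to B/J$ by a unital $^*$-homomorphism $\Phi\colon \Cstar(\F)\to M(\s (B/J))$ whose $(1,1)$-corner recovers $\phi$. A $^*$-homomorphism of $\Cstar(\F)$ \emph{is} determined by the unitaries $\Phi(u_i)$, and unitaries in $M(\s(B/J))$ do lift to unitaries in $M(\s B)$ (the $2\times 2$ rotation trick absorbed by stability of $\s B$, together with the noncommutative Tietze extension theorem); by the universal property of $\Cstar(\F)$ the lifted unitaries assemble into a $^*$-homomorphism lifting $\Phi$, and compressing back to the corner yields a ucp lift of $\phi$. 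So the engine is ``dilate the map, then lift the generators of a $^*$-homomorphism,'' not free-product machinery and not a local-to-global gluing; this is exactly the scheme that Section \ref{Cond Proj and LLP} of the paper abstracts in Proposition \ref{LLP} and Theorem \ref{condproj}. If you want to salvage your write-up, replace the splicing step by this dilation argument (or simply cite \cite[Lemma 3.3]{Kir94} for the LP, as the paper does).
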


One should think of $\Cstar$-algebras with the LP as projective objects in the category of unital $\Cstar$-algebras where the morphisms are ucp maps and the epimorphisms are $^*$-homomorphisms. With this perspective, Kirchberg's theorem for $\Cstar (\F)$ can be viewed as a projective analogue of Arveson's celebrated extension theorem, which established the injectivity of $\mathcal{B}(\H)$ for any infinite dimensional Hilbert space $\H$. 

Dual to the LLP is Lance's Weak Expectation Property, which is also sometimes referred to as ``weak injectivity". For the sake of brevity, we forgo the classic definition, which justifies the names, and give instead a striking characterization due to Kirchberg. The reader is directed to \cite[Chapter 13]{BO} for a longer exposition. 

\begin{definition}[{{\cite[Proposition 1.1]{Kir94}}}]\label{WEP}
Let $\F$ be any nonabelian free group. 
A $\Cstar$-algebra $B$ has the {\it Weak Expectation Property} (WEP) if and only if $\Cstar (\F)\Max B \simeq \Cstar (\F)\Min B$ canonically\footnote{For $\Cstar$-algebras $A$ and $B$, we often write $A\Max B=A\Min B$ to mean that the two $\Cstar$-tensor products are canonically isomorphic, i.e., via an isomorphism extending the identity map on the algebraic tensor product.}. A $\Cstar$-algebra is called {\it QWEP} if it is the quotient of a $\Cstar$-algebra with the WEP. 
\end{definition}

In particular, $\Cstar (\F)\Max \mathcal{B}(\H)=\Cstar (\F)\Min \mathcal{B}(\H)$ for any free group $\F$ and any Hilbert space $\H$ (\cite[Corollary 1.2]{Kir94}, see also \cite{Pis96}), and so $\mathcal{B}(\H)$ has the WEP for any Hilbert space $\H$ by the above definition\footnote{That $\mathcal{B}(\H)$ has the WEP would immediately follow from the fact that $\mathcal{B}(\H)$ is injective if we gave instead Lance's definition, \cite[Definition 2.8]{Lan73}.)}. 

Just as any $\Cstar$-algebra embeds into $\mathcal{B}(\H)$ for some Hilbert space $\H$, any unital $\Cstar$-algebra is the quotient of $\Cstar (\F)$ for some free group $\F$ 
(which can be assumed countably generated when $A$ is separable). With this observation, we arrive at one more characterization for the LP and LLP, which is well-known to experts. For a sketch of the proof, see \cite[Proposition 6.5]{CShe}.

\begin{proposition}\label{LP from F}
Let $A$ be a unital $\Cstar$-algebra and $\F$ a free group such that there exist a closed two-sided ideal $J\subset \Cstar (\F)$ and $^*$-isomorphism $\phi:A\to \Cstar (\F)/J$. Then $A$ has the LLP if and only if $\phi$ locally lifts. When $A$ is separable and $\F$ is chosen to be countably generated, then $A$ has the LP if and only if $\phi$ lifts. 
\end{proposition}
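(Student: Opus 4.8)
The plan is to deduce both statements from Kirchberg's Lemma \ref{ArvesonLP} — that $\Cstar(\F)$ has the LLP in general and the LP when $\F$ is countably generated — together with the elementary facts that cpc maps compose to cpc maps and that $^*$-isomorphisms and quotient maps are ucp. The two ``only if'' directions require nothing: $\phi$ is a unital $^*$-isomorphism, hence in particular a cpc map from $A$ into the quotient $\Cstar(\F)/J$, so if $A$ has the LLP (resp.\ the LP) then $\phi$ is locally liftable (resp.\ liftable) straight from Definition \ref{defLLP}. Neither separability nor countable generation of $\F$ is needed here.

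For the converse I would fix the quotient map $\pi\colon\Cstar(\F)\to\Cstar(\F)/J$ and use the inverse $\phi^{-1}\colon\Cstar(\F)/J\to A$, which is ucp since it is a unital $^*$-homomorphism; this is the only place the hypothesis that $\phi$ is an isomorphism is used. Assume $\phi$ locally lifts. Given a $\Cstar$-algebra $B$ with closed two-sided ideal $I$, quotient map $q\colon B\to B/I$, a cpc map $\psi\colon A\to B/I$, and a finite-dimensional operator system $E\subset A$, I would first take a cpc lift $\sigma_E\colon E\to\Cstar(\F)$ of $\phi|_E$ along $\pi$, and set $F:=\sigma_E(E)+\C 1_{\Cstar(\F)}$, a finite-dimensional operator system in $\Cstar(\F)$. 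The composite $\psi\circ\phi^{-1}\circ\pi\colon\Cstar(\F)\to B/I$ is cpc, so by Lemma \ref{ArvesonLP} it admits a cpc lift $\tau_F\colon F\to B$ along $q$ on $F$; then $\tau_F\circ\sigma_E\colon E\to B$ is cpc and $q\circ(\tau_F\sigma_E)=(\psi\phi^{-1}\pi)\circ\sigma_E=\psi\phi^{-1}\phi|_E=\psi|_E$, so $\psi|_E$ lifts. As $B$, $I$, $\psi$, and $E$ were arbitrary, $A$ has the LLP.

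When $A$ is separable and $\F$ is countably generated the same chase runs globally: $\Cstar(\F)$ now has the LP by Lemma \ref{ArvesonLP}, so a cpc lift $\sigma\colon A\to\Cstar(\F)$ of $\phi$ exists, and for any cpc $\psi\colon A\to B/I$ one lifts $\psi\phi^{-1}\pi$ to a cpc map $\tau\colon\Cstar(\F)\to B$ and verifies $q\circ(\tau\sigma)=\psi$. I do not expect a genuine obstacle here: the whole weight of the proposition rests on Lemma \ref{ArvesonLP}, and what remains is a formal diagram chase. The only minor points to attend to are that $\phi^{-1}$ must be available and ucp (whence the isomorphism hypothesis, rather than merely that $\phi$ be ucp), and that one should pad $\sigma_E(E)$ by a scalar multiple of the unit so that the finite-dimensional space to which the local lifting property of $\Cstar(\F)$ is applied is honestly an operator system.
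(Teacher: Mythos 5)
Your argument is correct and is exactly the standard composition argument that the paper defers to (it only cites \cite[Proposition 6.5]{CShe} for a sketch rather than proving the statement itself): the ``only if'' directions are immediate from the definitions, and the converses follow by factoring a given cpc map $\psi\colon A\to B/I$ as $\psi\phi^{-1}\pi$ through $\Cstar(\F)$ and invoking Lemma \ref{ArvesonLP}. You also handle the one genuinely delicate point in the local case correctly, namely choosing the finite-dimensional operator system $F\subset\Cstar(\F)$ \emph{after} lifting $\phi|_E$, so that it contains $\sigma_E(E)$ and the unit.
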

In particular, if we know that any unital $^*$-homomorphism from $A$ into a quotient $\Cstar$-algebra lifts to a ucp map, then it follows that any cpc map from $A$ into a quotient $\Cstar$-algebra lifts to a cpc map. 

\subsection{Lifting $^*$-homomorphisms}
Turning our attention to $^*$-homomorphisms, we consider another important lifting property for $\Cstar$-algebras. 

\begin{definition}
A $\Cstar$-algebra is {\it projective} if for any $\Cstar$-algebra $B$ with two-sided closed ideal $J\subset B$, any $^*$-homomorphism $\phi:A\to B/J$ lifts to a $^*$-homomorphism $\psi:A\to B$ so that $\phi=\pi\psi$ where $\pi:B\to B/J$ is the quotient map. 
\end{definition}
In other words, it is a projective object in the category of $\Cstar$-algebras with $^*$-homomorphisms. 
A projective $\Cstar$-algebra cannot be unital (because it embeds into its cone). However, a $\Cstar$-algebra $A$ is projective if and only if its unitization $\tilde{A}$ is projective in the category of unital $\Cstar$-algebras with unital $^*$-homomorphisms (\cite[Proposition 2.5]{Bla98}). 
One unitally projective $\Cstar$-algebra of particular interest to us is the universal unital $\Cstar$-algebra generated by a contraction, which we denote by $\A$. (That $\A$ is unitally projective follows from the fact that any contraction in a quotient $\Cstar$-algebra lifts to a contraction.) 

As an immediate corollary to Proposition \ref{LP from F}, we have that projectivity implies the (L)LP. 
\begin{corollary}\label{Proj implies LLP}
Every projective $\Cstar$-algebra has the LLP. Every separable projective $\Cstar$-algebra has the LP.
\end{corollary}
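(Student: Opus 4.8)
The plan is to reduce, via Proposition \ref{LP from F}, the verification of the (L)LP to the lifting of a single $^*$-homomorphism, and then to invoke projectivity directly. First I would pass to the unital setting: a projective $\Cstar$-algebra $A$ is never unital, but by \cite[Proposition 2.5]{Bla98} its unitization $\tilde{A}$ is projective in the category of unital $\Cstar$-algebras with unital $^*$-homomorphisms, and by Definition \ref{defLLP} the algebra $A$ has the (L)LP precisely when $\tilde{A}$ does. So it suffices to show that a unitally projective $\tilde{A}$ has the LLP, and has the LP when $\tilde{A}$ is separable.

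Next I would realize $\tilde{A}$ as a quotient of a full free group $\Cstar$-algebra: there is a free group $\F$ --- which can be taken countably generated if $\tilde{A}$ is separable --- together with a closed two-sided ideal $J\subset \Cstar(\F)$ and a $^*$-isomorphism $\phi\colon \tilde{A}\to \Cstar(\F)/J$. Applying unital projectivity of $\tilde{A}$ to the unital $^*$-homomorphism $\phi$ and the quotient map $\pi\colon \Cstar(\F)\to \Cstar(\F)/J$ produces a unital $^*$-homomorphism $\psi\colon \tilde{A}\to \Cstar(\F)$ with $\pi\psi=\phi$. Since a unital $^*$-homomorphism is ucp, hence cpc, this $\psi$ is in particular a cpc (indeed global, hence local) lift of $\phi$ in the sense of Definition \ref{def liftable}. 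Proposition \ref{LP from F} then yields that $\tilde{A}$ has the LLP, and that $\tilde{A}$ has the LP when it is separable and $\F$ is chosen countably generated; by the first paragraph the same conclusions transfer to $A$.

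There is no substantial obstacle here --- this is exactly why the statement is recorded as an immediate corollary. The only points meriting a moment's care are the reduction to the unital case, so that Proposition \ref{LP from F} (stated for unital algebras) applies, and the observation that a $^*$-homomorphic lift is in particular a cpc lift, so that projectivity is genuinely a strengthening of the lifting condition isolated in Proposition \ref{LP from F}.
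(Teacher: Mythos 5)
Your proof is correct and follows exactly the route the paper intends: the corollary is stated as an immediate consequence of Proposition \ref{LP from F}, and your argument—unitize via \cite[Proposition 2.5]{Bla98}, lift the identification $\tilde{A}\simeq \Cstar(\F)/J$ to a unital $^*$-homomorphism by projectivity, and note that this is in particular a cpc lift—is precisely the intended one.
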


Projectivity is an extremely strong and equally rare property, and so there has been significant interest in various weaker notions of projectivity. Our focus will be on $\Cstar$-algebras with associated conditionally projective maps. 

\begin{definition}[\cite{ELP}]\label{ELP}
Let $A$ and $C$ be $\Cstar$-algebras. We say a $^*$-homomorphism $\alpha:C\to A$ is {\it conditionally projective} if 
given a commutative diagram (with $^*$-homomorphisms)
\[
\begin{tikzcd}
C \arrow[swap]{d}{\alpha}\arrow{r}{\rho} & B\arrow{d}{\pi}\\
A \arrow[swap]{r}{\phi} & B/I,\\
\end{tikzcd}
\]
there exists a $^*$-homomorphism $\psi:A\to B$ so that the following diagram commutes
\[
\begin{tikzcd}
C \arrow[swap]{d}{\alpha}\arrow{r}{\rho} & B\arrow{d}{\pi}\\
A \arrow[swap]{r}{\phi}\arrow{ur}{\psi} & B/I.
\end{tikzcd}
\]
\end{definition}
In other words, whenever we can lift $\phi\circ\alpha$ to a $^*$-homomorphism, we can also lift $\phi$ to a $^*$-homomorphism. 
Notice that a $\Cstar$-algebra $A$ is projective exactly when the unital $^*$-homomorphism $\C\to \tilde{A}$ is conditionally projective.

In the case where $C=\C^N$ (and all maps and algebras are unital), whenever $B\to B/I$ is a quotient such that orthogonal projections (summing to one) lift to orthogonal projections (summing to one), we know that any (unital) $^*$-homomorphism $A\to B/I$ lifts to a (unital) $^*$-homomorphism $A\to B$. 


\begin{example}[\cite{LS14}]
Let $p(z)=a\sum_1^N (z-a_i)^{k_i}\in \C[z]$ be a nonconstant polynomial with complex coefficients, and let $C>0$ so that $p$ has a root of modulus strictly less than $C$. Write $A=\Cstar \langle x: \|x\|\leq C,\ p(x)=0\rangle$ for the associated universal $\Cstar$-algebra. 
Loring and Shulman show in \cite{LS14} that there exists a conditionally projective map $\C^{N}\to \tilde{A}$ (see \cite[Remark 12]{LS14}). 

In other words, if $y$ is an element in some unital quotient $\Cstar$-algebra satisfying $\|y\|\leq C$ and $p(y) = 0$, then one can assign to $y$, in a canonical and functorial way, a collection of orthogonal projections $\{p_1,...,p_{N}\}\in \Cstar (y,1)$ that sum to one such that $y$ lifts to an element $Y$ satisfying $\|Y\|\leq C$ and $p(Y)=0$ if and only if these projections lift to a collection of orthogonal projections $\{P_1,...,P_{N}\}$ that also sum to one. 

\end{example}

\subsection{Multiplier Algebras and Hilbert Modules}

We write $\K$ for the compact operators in $\mathcal{B}(\ell^2)$ and $e_{11}$ for the projection in $\mathcal{B}(\ell^2)$ onto the first coordinate of a vector in $\ell^2$. 
 For any $\Cstar$-algebra $A$, we denote its multiplier algebra by $M(A)$ and its corona algebra $M(A)/A$ by $Q(A)$. (See \cite{Bla98}, \cite{Lor97}, or \cite{Lan95} for preliminary definitions and standard results on multiplier and corona algebras for $\Cstar$-algebras.)
For any unital $\Cstar$-algebra $A$ and $x\in M(\s A)$, we denote by $x_{11}$ the image of $x$ under the compression $$(e_{11}\otimes 1_A)x(e_{11}\otimes 1_A)$$ composed with the identification $e_{11}\otimes A\simeq A$. 

For a $\Cstar$-algebra $B$ and Hilbert $B$-modules $E$ and $F$, we denote by $\L(E,F)$ the space of adjointable bounded linear maps from $E$ to $F$, and write $\L(E)=\L(E,E)$. Given a Hilbert $B$-module $E$, we denote the Hilbert $B$-module $\bigoplus_{j=1}^\infty E$ by $E^\infty$. 
The most basic example of a Hilbert $B$-module is the $\Cstar$-algebra $B$ itself with inner product $\langle a| b\rangle=a^*b$. 
Another fundamental example is $\H_B:=\H\otimes B$ where $\H$ is a separable infinite-dimensional Hilbert space. 
For our last example of a Hilbert $B$-module, let $A$ be a unital $\Cstar$-algebra, $E$ a Hilbert $B$-module, and $\phi:A\to \L(E)$ a ucp map. We define the Hilbert $B$-module $A\otimes_\phi E$ to be the separation and completion of the $B$-module $A\odot E$ with respect to the inner product given on simple tensors by 
$$\langle a_1\otimes b_1| a_2\otimes b_2\rangle=\langle b_1| \phi(a_1^*a_2)b_2\rangle.$$ 
See \cite[Chapter 5]{Lan95} for a more thorough construction of $A\otimes_\phi E$, or just follow Kasparov's construction for $E=\H_B$ in the proof of Theorem 3 in \cite{Kas80}. 

We say a Hilbert $B$-module $E$ is {\it countably generated} if there exists a countable set $\{x_n\}_n\subset E$ such that the set $\{\sum_n x_nb_n| b_n\in B\}$ is dense in $E$. We say $E$ is {\it full} if $\langle E| E\rangle$ is dense in $B$. 
(For preliminary definitions and standard results on Hilbert $B$-modules, see \cite{Lan95}.)

\section{Conditional Projectivity and the Local Lifting Property}\label{Cond Proj and LLP}
The goal of this section is to show that any separable $\Cstar$-algebra $A$ with a conditionally projective map $\C^N\to A$ has the LLP (Theorem \ref{condproj}). The main idea is to adapt Kirchberg's proof in \cite[Lemma 3.3]{Kir93} that $\Cstar (\F_\infty)$ has the LP to show that any ucp map $A\to Q(\K)$ lifts to a ucp map $A\to M(\K)$. By Ozawa's characterization of the LLP (Proposition \ref{Ozawa}), this will suffice to show that $A$ has the LLP. 
First, we must recall two powerful theorems that will be crucial for the main results in this section. 

\subsection{Dilation and Extension Theorems}\label{Section: Theorems}

The first of the two theorems is the noncommutative Tietze Extension Theorem. It was first proved for separable $\Cstar$-algebras in \cite[Theorem 4.2]{APT}. For the $\sigma$-unital case, the reader is referred to \cite[Proposition 6.8]{Lan95} or \cite[Theorem 9.2.1]{Lor97}, both of which follow the argument for separable $\Cstar$-algebras in \cite[Proposition 3.12.10]{Ped79}.
 
\begin{theorem}[Noncommutative Tietze Extension Theorem]\label{ncT}
Let $A$ and $B$ be $\Cstar$-algebras and $\pi:B\to A$ a surjective $^*$-homomorphism. Then $\pi$ extends uniquely to a $^*$-homomorphism $\hat{\pi}:M(B)\to M(A)$ where $\hat{\pi}(x)\cdot\pi(b)=\pi(x\cdot b)$ for all $b\in B$. Moreover, if $B$ is $\sigma$-unital, then $\pi$ is surjective.
\end{theorem}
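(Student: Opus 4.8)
The plan is to build $\hat\pi$ by prescribing how it acts on $A=\pi(B)$ and checking the algebra axioms there, and then to treat surjectivity (the hard part) separately, using $\sigma$-unitality to make an approximate-unit argument converge.

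\emph{Construction and uniqueness of $\hat\pi$.} The preliminary observation I would record is that every multiplier of $B$ preserves every closed two-sided ideal $J\subseteq B$: choosing an approximate unit $(u_\lambda)$ for $J$, for $x\in M(B)$ and $j\in J$ one has $xj=\lim_\lambda (xu_\lambda)j\in\overline{BJ}=J$, and symmetrically $jx\in J$. Applying this with $J:=\ker\pi$, the formulas $\hat\pi(x)\pi(b):=\pi(xb)$ and $\pi(b)\hat\pi(x):=\pi(bx)$ become well defined on $A$. I would then verify (i) that these two maps are bounded by $\|x\|$ — for $a\in A$, pick $b$ with $\pi(b)=a$ and $\|b\|\le\|a\|+\varepsilon$, so that $\|\pi(xb)\|\le\|x\|(\|a\|+\varepsilon)$; and (ii) that they obey the double-centralizer identity, since $\pi(b_1)\cdot\pi(xb_2)=\pi\bigl(b_1(xb_2)\bigr)=\pi\bigl((b_1x)b_2\bigr)=\pi(b_1x)\cdot\pi(b_2)$ is just the double-centralizer identity for $x$ pushed through $\pi$. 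Hence $\hat\pi(x)\in M(A)$. Linearity is clear; multiplicativity and $^*$-preservation follow by multiplying the desired identities on the right by an arbitrary $\pi(b)$ — e.g. $\hat\pi(xy)\pi(b)=\pi((xy)b)=\hat\pi(x)\bigl(\hat\pi(y)\pi(b)\bigr)$ — and invoking that $A$ is an essential ideal of $M(A)$; and $\hat\pi|_B=\pi$ because $\hat\pi(b)\pi(c)=\pi(bc)=\pi(b)\pi(c)$ for all $c$. Uniqueness is immediate: the stipulated relation $\hat\pi(x)\pi(b)=\pi(xb)$ already pins down $\hat\pi(x)$ as a left multiplier of $A$ on $\pi(B)=A$.

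\emph{Surjectivity when $B$ is $\sigma$-unital.} In this case $\ker\pi$ and $A$ are again $\sigma$-unital, so one may work throughout with \emph{sequential} approximate units. Given $m\in M(A)$, the plan is to lift $m$ "piece by piece'': fix an increasing approximate unit $(e_n)$ of $B$ with $e_{n+1}e_n=e_n$ (obtained by functional calculus from a strictly positive element of $B$), lift each $m\,\pi(e_n)\in A$ back into $B$ with controlled norm to get $d_n\in B$, and then correct consecutive lifts against a sequential approximate unit of $\ker\pi$ so that $d_n-d_{n-1}$ is "supported near'' $e_n-e_{n-1}$. The telescoping series $\sum_n (d_n-d_{n-1})$ then converges strictly to an element $M\in M(B)$, and one checks $\hat\pi(M)\pi(b)=\lim_n m\,\pi(e_n)\pi(b)=m\pi(b)$ for all $b\in B$, i.e. $\hat\pi(M)=m$. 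I expect the real obstacle to be exactly this last assembly step: arranging the piecewise lifts so that the series converges \emph{strictly} in $M(B)$, which is where $\sigma$-unitality is genuinely used (the strict topology is metrizable on bounded subsets of $M(B)$). This is precisely the content of the argument in \cite[Proposition 3.12.10]{Ped79}, with the Hilbert-module formulations in \cite[Proposition 6.8]{Lan95} and \cite[Theorem 9.2.1]{Lor97}, and I would simply follow it.
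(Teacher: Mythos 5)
The paper does not actually prove this theorem; it only cites \cite[Theorem 4.2]{APT}, \cite[Proposition 6.8]{Lan95}, \cite[Theorem 9.2.1]{Lor97}, and \cite[Proposition 3.12.10]{Ped79}, so there is no in-paper argument to compare against. Your construction of $\hat{\pi}$ is correct and is the standard one: the observation that multipliers preserve closed two-sided ideals gives well-definedness modulo $\ker\pi$, the double-centralizer identity passes through $\pi$, and uniqueness follows because a multiplier of $A$ is determined by its left action on the essential ideal $A=\pi(B)$. This part is complete as written.

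One caveat on the surjectivity sketch. You assert that $\ker\pi$ is again $\sigma$-unital when $B$ is; that is false in general --- a closed two-sided ideal of a $\sigma$-unital (even unital) $\Cstar$-algebra need not be $\sigma$-unital (e.g.\ the ideal of functions vanishing at a point $p\in\beta\N\setminus\N$ inside $C(\beta\N)$, since such a $p$ is not a $G_\delta$). So the step ``correct consecutive lifts against a sequential approximate unit of $\ker\pi$'' is not available in the stated generality. The standard argument (as in \cite[Proposition 6.8]{Lan95}) avoids this: one builds the approximate unit $(e_n)$ from a strictly positive element of $B$ with $e_{n+1}e_n=e_n$, writes $m\,\pi(e_n-e_{n-1})$ in the form $\pi(v_n)\,c_n\,\pi(v_n)$ with $v_n=(e_{n+1}-e_{n-2})^{1/2}$, and lifts to elements of the form $v_n b_n v_n\in B$; strict convergence of the telescoping sum then comes from $v_nv_{n'}=0$ for $|n-n'|\geq 3$, using only $\sigma$-unitality of $B$ itself. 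Since you ultimately defer execution to exactly these references, the proof goes through, but the mechanism you describe for the correction step should be replaced by the orthogonality trick above.
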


When $B$ is a unital $\Cstar$-algebra with closed two-sided ideal $J$ and quotient map $\pi:B\to B/J$, we apply the noncommutative Tietze Extension Theorem to 
$$\id_\K\otimes q: \s B \to \s B/J$$ to get a surjection $M(\s B)\to M(\s B/J)$, which we also call $\hat{\pi}$. Note that for each $x\in M(\s B)$, we have $(\hat{\pi}(x))_{11}=\pi(x_{11})$. 




The second key theorem is the Kasparov-Stinespring Dilation Theorem for $\Cstar$-algebras. 
Because we will be more interested in dilating a ucp map $\phi:A\to B$, we will actually use the following corollary to (the proof of) \cite[Theorem 3]{Kas80}. While it is certainly known to experts, we cannot find an exact reference, and so we give its statement and a quick proof here. The proof is basically exactly Kasparov's in \cite{Kas80} except that we exchange the Hilbert module $A\otimes_\phi \H_B$ for $A\otimes_\phi B$ to compensate for the fact that our ucp map $\phi:A\to B$ is (highly) degenerate when viewed as a map into $M(\s B)$. 

\begin{corollary}[Kasparov]\label{Kasparov-Stinespring}
Let $A$ and $B$ be unital $\Cstar$-algebras with $A$ separable, and let $\phi:A\to B$ be a ucp map. Then there exists a unital $^*$-homomophism $\Phi:A\to M(\s B)$ such that for each $a\in A$,
$$(e_{11}\otimes 1_B)\Phi(a)(e_{11}\otimes 1_B)=e_{11}\otimes\phi(a).$$
In particular, $\Phi(a)_{11}=\phi(a)$. 
\end{corollary}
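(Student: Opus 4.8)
The plan is to run Kasparov's construction for the Stinespring dilation but with the Hilbert $B$-module $A\otimes_\phi B$ in place of $A\otimes_\phi \H_B$, exactly as the remark preceding the statement indicates. First I would form the Hilbert $B$-module $E:=A\otimes_\phi B$ using the inner product $\langle a_1\otimes b_1| a_2\otimes b_2\rangle=\langle b_1|\phi(a_1^*a_2)b_2\rangle=b_1^*\phi(a_1^*a_2)b_2$, so that $E$ is countably generated (since $A$ is separable) and the left action $a\cdot(a'\otimes b)=(aa')\otimes b$ gives a unital $^*$-homomorphism $\sigma:A\to \L(E)$; unitality is immediate from $1\cdot(a'\otimes b)=a'\otimes b$. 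The map $V:B\to E$, $b\mapsto 1_A\otimes b$, is an isometry in $\L(B,E)$ with adjoint $V^*(a\otimes b)=\phi(a)b$, and one checks $V^*\sigma(a)V=\phi(\,\cdot\,)$, i.e. $V^*\sigma(a)Vb=\phi(a)b$, which is the Stinespring identity at the Hilbert-module level.

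Next I would invoke Kasparov's stabilization theorem: since $E$ is countably generated over $B$, $E\oplus \H_B\simeq \H_B$, hence $E$ is a direct summand of $\H_B$. But here I want to land in $M(\s B)=M(\K\otimes B)$, and $\K\otimes B\simeq \H_B$ as Hilbert $B$-modules with $\L(\H_B)\simeq M(\K\otimes B)$ (this is the standard identification, e.g. \cite[Chapter 5]{Lan95} or Kasparov's proof of Theorem 3 in \cite{Kas80}). The subtlety, and the reason for replacing $\H_B$ by $B$ in the coefficient module, is that $\phi$ viewed as $A\to M(\s B)$ via $a\mapsto e_{11}\otimes\phi(a)$ is a highly degenerate (non-unital) ucp map, so the naive Stinespring dilation on $A\otimes_\phi\H_B$ would only give a $^*$-homomorphism that is also degenerate; by contracting the coefficient module to $B$ we arrange that the ``corner'' $V^*(\,\cdot\,)V$ recovers exactly $\phi$ on the copy $e_{11}\otimes B\simeq B$ rather than on a proper sub-module. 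Concretely, after fixing a unitary $U:E\oplus\H_B\to\H_B$ I would set $\Phi(a):=U(\sigma(a)\oplus 0)U^*\in\L(\H_B)\simeq M(\s B)$; this is a $^*$-homomorphism, but it is not yet unital.

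To get a \emph{unital} $\Phi$, I would replace $\sigma$ by $\sigma\oplus\iota$ where $\iota:A\to\L(\H_B)$ is the amplification of the canonical unital representation $a\mapsto a$ of the unital $\Cstar$-algebra $A$ on $\H_B$ (tensoring a faithful unital representation of $A$ with $1_B$, or simply taking a countable direct sum of copies of $\sigma$ absorbed via stabilization); then $\sigma\oplus\iota:A\to\L(E\oplus\H_B)$ is unital, $E\oplus\H_B\simeq\H_B$ again by stabilization, and transporting along that unitary gives a unital $^*$-homomorphism $\Phi:A\to M(\s B)$. The only thing to be careful about is that the unitary can be chosen to carry the original isometry $V:B\to E\hookrightarrow E\oplus\H_B$ onto $e_{11}\otimes B\subset\H_B$, so that $(e_{11}\otimes 1_B)\Phi(a)(e_{11}\otimes 1_B)=e_{11}\otimes V^*\sigma(a)V=e_{11}\otimes\phi(a)$; this is exactly the bookkeeping in Kasparov's original argument and is where I would spend the most care. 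Taking $x_{11}$ then yields $\Phi(a)_{11}=\phi(a)$, as required.

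I expect the main obstacle to be purely organizational: making sure the stabilization unitary is chosen compatibly with the corner $e_{11}\otimes B$ so that the compression identity holds on the nose (not just up to conjugation by a unitary in $M(\s B)$), and confirming that adjoining the auxiliary unital representation to fix unitality does not disturb that corner. None of this requires new ideas beyond \cite{Kas80}; it is a matter of running Kasparov's proof with the stated substitution of modules and checking the two displayed identities.
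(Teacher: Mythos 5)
Your proposal is correct and follows essentially the same route as the paper: dilate on $E=A\otimes_\phi B$, use the isometry $Wb=1_A\otimes b$ with $W^*\pi_1(\cdot)W=\phi(\cdot)$, and pad with an auxiliary unital representation on $\H_B$ to restore unitality (the paper uses $\pi_\infty$ on $E^\infty\simeq\H_B$, which requires noting that $E$ is full because $\phi$ is unital, rather than your $\rho\otimes 1_B$). The bookkeeping you flag is resolved exactly as you suspect: one writes $E\oplus E^\infty=(WW^*E)\oplus\bigl((1-WW^*)E\oplus E^\infty\bigr)$, identifies the first summand with $B$ via $W$ and the second with $\H_B$ by stabilization, so the implementing unitary is $W\oplus U$ and the compression identity holds on the nose.
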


\begin{proof}
Let $E=A\otimes_\phi B$. Since $\phi$ is unital and $A$ is separable, $E$ is a countably generated and full Hilbert $B$-module. By \cite[Theorem 1.9]{MP84}, we have $E^\infty\simeq \H_B$. Let $\pi_1:A\to \L(E)$ be the unital $^*$-homomorphism induced by the left action of $A$ on $A\odot B$, i.e. for all $a'\in A$ and $a\otimes b\in A\otimes B$. 
$$\pi_1(a')(a\otimes b)=a'a\otimes b,$$
and similarly let $\pi_\infty:A\to \L(E^\infty)$ be the unital $^*$-homomophism such that,
$$\pi_\infty(a)((a_n\otimes b_n)_n)=(aa_n\otimes b_n)_n$$
for all $a\in A$ and $(a_n\otimes b_n)_n\in E^\infty$. Note that $\pi_\infty$ is a unital $^*$-homomorphism. 

Define $W\in \L(B,E)$ by $Wb=1_A\otimes b$ and $W^*(a\otimes b)=\phi(a)b$. (The argument that $W^*$ extends to $E$ is the same as in \cite[Theorem 3]{Kas80}.) 
Note that for all $a\in A$ and $b\in B$, we have $W^*\pi_1(a)W(b)=\phi(a)b$, so $W^*\pi_1(a)W=\phi(a)\in \L(B)=B$. 

Since $W^*W=1_{\L(B)}$, it follows that $WW^*\in \L(E)$ is a projection and $W\in \L(B, WW^*E)$ is a unitary. By Kasparov's stabilization theorem (\cite[Theorem 2]{Kas80}), we have an isomorphism 
$$((1-WW^*)E)\oplus E^\infty\simeq ((1-WW^*)E)\oplus \H_B\simeq \H_B,$$
which is implemented by some unitary $U\in \L(\H_B,((1-WW^*)E)\oplus E^\infty)$. 
Then $V:=W\oplus U$ is a unitary implementing an isomorphism 
$$B\oplus \H_B\simeq (WW^*E)\oplus ((1-WW^*)E)\oplus E^\infty\simeq E \oplus E^\infty.$$
So, $V^*\pi_\infty(\cdot)V=(W^*\pi_1(\cdot)W)\oplus(U^*\pi_\infty(\cdot) U):A\to \L(B\oplus \H_B)$ is a unital $^*$-homomorphism. 
Let $\Phi:A\to M(\s B)$ be the composition of $V^*(\pi_\infty)V$ with the $^*$-isomophism $\Psi:\L(B\oplus \H_B)\to M(\s B)$, which maps the projection onto the first summand to $e_{11}\otimes 1_B$. Then $\Phi$ is a unital $^*$-homomorphism, and 
\begin{align*}
    (e_{11}\otimes 1_B)\phi(a)(e_{11}\otimes 1_B)&=\Psi((W^*\pi_1(a)W)\oplus 0_{\H_B})\\
    &=\Psi(\phi(a)\oplus 0_{\H_B})=e_{11}\otimes \phi(a)
\end{align*}
for each $a\in A$.
\end{proof}

\subsection{Lifting cp maps by lifting $^*$-homomorphisms}

First, we flesh out the characterization for the LLP for separable $\Cstar$-algebras that is implicit in the combination of Kirchberg's proof of \cite[Lemma 3.3]{Kir94} and Ozawa's characterization of the LLP (\cite[Proposition 3.13]{Oza04}), which we recounted in Proposition \ref{Ozawa}. 

\begin{proposition}\label{LLP}
Let $\pi:M(\K)\to Q(\K)$ be the quotient map and $\hat{\pi}:M(\s M(\K))\to M(\s Q(\K))$ the surjective extension of $\id_K\otimes \pi$ guaranteed by the noncommutative Tietze Extension Theorem (Theorem \ref{ncT}). 
A separable unital $\Cstar$-algebra $A$ has the LLP if and only if any unital $^*$-homomorphism $\rho:A\to M(\K\otimes Q(\K))$ lifts to a ucp map $\theta:A\to M(\K\otimes M(\K))$ such that $\hat{\pi}\theta=\rho$.
\end{proposition}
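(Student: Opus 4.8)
The plan is to prove Proposition~\ref{LLP} by chaining together the Kasparov--Stinespring dilation (Corollary~\ref{Kasparov-Stinespring}), Ozawa's characterization of the LLP (Proposition~\ref{Ozawa}), and the noncommutative Tietze extension theorem (Theorem~\ref{ncT}) as a bookkeeping device. The key observation is that $Q(\K) = M(\K)/\K$ has a canonical identification with the Calkin algebra $\mathcal{B}(\ell^2)/\K(\ell^2)$ (since $M(\K) = \mathcal{B}(\ell^2)$), so by Proposition~\ref{Ozawa}, $A$ has the LLP if and only if every ucp map $\phi:A\to Q(\K)$ lifts to a ucp map $A\to M(\K)$.

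\emph{Forward direction (LLP $\Rightarrow$ the stated lifting condition).} Assume $A$ has the LLP and let $\rho:A\to M(\s Q(\K))$ be a unital $^*$-homomorphism. Compose with the compression to the $(1,1)$-corner to get a ucp map $\rho_{11}:A\to Q(\K)$, where $\rho_{11}(a) = \rho(a)_{11}$. By the LLP (via Proposition~\ref{Ozawa}) this lifts to a ucp map $\sigma:A\to M(\K)$ with $\pi\sigma = \rho_{11}$. Now apply Corollary~\ref{Kasparov-Stinespring} with $B = M(\K)$ to dilate $\sigma$ to a unital $^*$-homomorphism $\Sigma:A\to M(\s M(\K))$ with $\Sigma(a)_{11} = \sigma(a)$ and, more precisely, $(e_{11}\otimes 1)\Sigma(a)(e_{11}\otimes 1) = e_{11}\otimes\sigma(a)$. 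Then $\hat\pi\Sigma:A\to M(\s Q(\K))$ is a unital $^*$-homomorphism whose $(1,1)$-corner is $\pi\sigma = \rho_{11}$, i.e.\ it agrees with $\rho$ on the corner. The issue is that $\hat\pi\Sigma$ need not equal $\rho$ on the nose — this is where Kirchberg's trick enters: one uses that $A$ separable and the relevant $^*$-homomorphisms into $M(\s Q(\K))$ with the same corner are approximately unitarily equivalent, or rather one absorbs $\rho$ into a direct sum. Concretely, replace $\Sigma$ by $\Sigma' = \Sigma \oplus \rho' $ built so that $\hat\pi\Sigma' $ is unitarily equivalent to $\rho$ via a unitary lifting from $M(\s Q(\K))$ to $M(\s M(\K))$; then $\theta := \mathrm{Ad}(\text{that lifted unitary})\circ(\text{corner inclusion of }\Sigma')$ is the desired ucp lift of $\rho$.

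\emph{Reverse direction (the stated lifting condition $\Rightarrow$ LLP).} Assume every unital $^*$-homomorphism $A\to M(\s Q(\K))$ lifts to a ucp map $A\to M(\s M(\K))$ compatible with $\hat\pi$. By Proposition~\ref{Ozawa} it suffices to show an arbitrary ucp map $\phi:A\to Q(\K)$ lifts to a ucp map $A\to M(\K)$. Dilate $\phi$ using Corollary~\ref{Kasparov-Stinespring} with $B = Q(\K)$ to get a unital $^*$-homomorphism $\Phi:A\to M(\s Q(\K))$ with $\Phi(a)_{11} = \phi(a)$. By hypothesis, $\Phi$ lifts to a ucp map $\theta:A\to M(\s M(\K))$ with $\hat\pi\theta = \Phi$. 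Compress $\theta$ to the $(1,1)$-corner: the map $a\mapsto \theta(a)_{11}\in M(\K)$ is ucp, and since $(\hat\pi\theta)_{11} = \pi(\theta_{11})$ we get $\pi(\theta(a)_{11}) = \Phi(a)_{11} = \phi(a)$, so $\theta_{11}$ is the required lift.

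\emph{Main obstacle.} The genuinely nontrivial point is the forward direction: getting from a ucp lift of the \emph{corner} $\rho_{11}$ to a ucp lift of $\rho$ \emph{itself}. This is exactly the content of Kirchberg's argument in \cite[Lemma 3.3]{Kir94}, and it relies on the fact that any two unital $^*$-homomorphisms from a separable $A$ into $M(\s Q(\K))$ with unitarily equivalent (indeed, suitably matching) corners can be intertwined by a unitary in $M(\s Q(\K))$ that lifts to a unitary in $M(\s M(\K))$ — here one uses that $\s M(\K)$ is $\sigma$-unital so that, by the noncommutative Tietze extension theorem, unitaries in the corona lift to unitaries (or at least to partial isometries that can be adjusted) in the multiplier algebra. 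Stitching this together with the absorption $E^\infty \simeq \H_B$ used in Corollary~\ref{Kasparov-Stinespring} so that the dilation of $\rho_{11}$ genuinely ``contains'' a copy of $\rho$ is the crux; everything else is formal manipulation of corners and the identity $(\hat\pi(x))_{11} = \pi(x_{11})$.
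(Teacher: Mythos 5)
Your reverse direction is exactly the paper's argument (Kasparov--Stinespring dilation of $\phi$, lift the dilation by hypothesis, compress to the $(1,1)$-corner, and use $(\hat{\pi}(x))_{11}=\pi(x_{11})$), and it is correct. The problem is the forward direction, where you have a genuine gap at precisely the step you flag as ``the crux.'' Your plan is: compress $\rho$ to $\rho_{11}:A\to Q(\K)$, lift that to $\sigma:A\to M(\K)$ using the LLP, dilate $\sigma$ to $\Sigma:A\to M(\s M(\K))$, and then argue that $\hat{\pi}\Sigma$ and $\rho$ can be intertwined by a liftable unitary because they have ``the same corner.'' But two unital $^*$-homomorphisms into $M(\s Q(\K))$ whose $(1,1)$-corner compressions agree as ucp maps need not be unitarily (or even approximately unitarily) equivalent: the corner compression of a non-minimal Stinespring dilation carries very little information about the homomorphism itself. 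To make your absorption idea work you would need a Voiculescu-type absorption theorem for representations of $A$ into $M(\s Q(\K))$, plus a lifting result for the implementing unitary from $M(\s Q(\K))$ to $M(\s M(\K))$; neither is established in the paper or available off the shelf, and this is not ``formal manipulation of corners.'' So as written the forward implication is not proved.

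The paper's forward direction avoids all of this and is much softer: since $M(\K)=\mathcal{B}(\ell^2)$ has the WEP and $\K$ is nuclear, $\K\otimes M(\K)$ has the WEP, hence so does $M(\s M(\K))$ by \cite[Proposition 5.5]{Kir93}; then \cite[Corollary 3.12]{Oza04} (the LLP/WEP lifting duality) says that \emph{every} ucp map from an LLP algebra $A$ into a quotient of a WEP algebra is ucp liftable. Applied to the quotient $\hat{\pi}:M(\s M(\K))\to M(\s Q(\K))$, this lifts $\rho$ directly (indeed it lifts arbitrary ucp maps, not just $^*$-homomorphisms), with no dilation, no absorption, and no unitary lifting needed. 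I would recommend replacing your forward direction with this argument; your instinct that Kirchberg's \cite[Lemma 3.3]{Kir94} machinery is lurking here is historically right, but the clean route in this generality goes through the WEP of $M(\s M(\K))$ rather than through comparing dilations.
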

  
\begin{proof}
Let $A$ be a separable unital $\Cstar$-algebra.
First, we assume $A$ has the LLP. Since $M(\K)=\mathcal{B}(\ell^2)$ 
has the WEP and $\K$ is nuclear, $\K\otimes M(\K)$ also has the WEP. Hence, \cite[Proposition 5.5]{Kir93} tells us 
that $M(\K\otimes M(\K))$ also has the WEP. By \cite[Corollary 3.12]{Oza04} (which was inspired by arguments in \cite{Kir93}), this implies that any ucp map $\rho:A\to M(\K\otimes Q(\K))$ is ucp liftable. In particular any unital $^*$-homomorphism is ucp liftable. 

On the other hand, to show that $A$ has the LLP, by \cite[Proposition 3.13]{Oza04} it suffices to lift any given ucp map $\phi:A\to Q(\K)$ to a ucp map $\psi:A\to M(\K)$. To that end, fix a ucp map $\phi:A\to Q(\K)$. Corollary \ref{Kasparov-Stinespring} then gives a unital $^*$-homomorphism $\Phi:A\to M(\s Q(\K))$ such that $(\Phi(a))_{11}=\phi(a)$ for all $a\in A$. By assumption, there is a ucp map $\theta:A\to M(\K\otimes M(\K))$ such that $\hat{\pi}\theta=\Phi$. Then the map $\psi:A\to M(\K)$ given by $\psi(a)=(\theta(a))_{11}$ is ucp. 
We compute for each $a\in A$,
\begin{align*}
    \pi(\psi(a))&=\pi((\theta(a))_{11})=(\hat{\pi}(\theta(a)))_{11}=(\Phi(a))_{11}=\phi(a).
\end{align*}
Visually, we have the following commutative diagram. 
\[
\begin{tikzcd}
 &M(\s M(\K))\arrow{d}{\hat{\pi}}\arrow{dr}{(1,1)}&&\\
&M(\s Q(\K)) \arrow{dr}{(1,1)}& M(\K)\arrow{d}{\pi}&\\
A \arrow[bend left]{uur}{\theta}\arrow{ur}{\Phi} \arrow{rr}{\phi}&& Q(\K).& \qedhere\\
\end{tikzcd}
 \]

\end{proof}

Given a $\Cstar$-algebra $A$ with closed two sided ideal $I$, we define 
$$M(A,I):=\{x\in M(A): (x\cdot A)\cup (A\cdot x)\subset I\}$$
as in \cite{Gab16}.
It is straightforward to show that $M(A,I)$ is a strictly closed two-sided ideal in $M(A)$ (in fact the strict closure of $I$ in $M(A)$). 
The following proposition is surely known to experts. We include a proof for lack of a citation. 

\begin{lemma}\label{kernel}
Let $A$ be a $\Cstar$-algebra, $I$ a closed two sided ideal in $A$. Then we have the following.
\begin{enumerate}
    \item The canonical $^*$-homomorphism $\sigma:M(A,I)\to M(I)$ is injective.
    \item The image of $\sigma$ is a hereditary $\Cstar$-subalgebra of $M(I)$. 
    \item The kernel of the extension $\hat{\pi}:M(A)\to M(A/I)$ of the map $\pi:A\to A/I$ is $M(A,I)$.
Moreover, when $A$ and $A/I$ are $\sigma$-unital, we have an exact sequence 
$$0\to M(A,I)\to M(A)\to M(A/I)\to 0.$$

\end{enumerate}
\end{lemma}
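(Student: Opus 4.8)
The plan is to verify the three items in order, with most of the work concentrated in (3). For (1), I would recall that $M(A,I)$ consists of those multipliers $x$ of $A$ for which $xA$ and $Ax$ land in $I$; restricting the multiplier action of such an $x$ to the ideal $I$ gives a well-defined $*$-homomorphism $\sigma:M(A,I)\to M(I)$. To see $\sigma$ is injective, suppose $\sigma(x)=0$, i.e. $xi=0=ix$ for all $i\in I$. For $a\in A$ we have $xa\in I$, hence $(xa)^*(xa)\in I$, so $x$ annihilates $x a$ on the left: $x(xa)=0$; similarly using $ax\in I$ one gets $(ax)x=0$. A short manipulation with an approximate identity of $I$ (noting $xa\in I$ and $x$ kills $I$) forces $xa=0$ and $ax=0$ for all $a\in A$, so $x=0$ as a multiplier.

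For (2), I would first identify the image: $\sigma(M(A,I))$ should be $\{y\in M(I): yA\subseteq I \text{ and } Ay\subseteq I \text{ under the canonical action of } M(I)\supseteq A/?\}$ — more precisely, using that $A$ maps canonically into $M(I)$, the image is $\{y\in M(I): y\cdot \overline{A}\subseteq I\}$ where $\overline{A}$ denotes the image of $A$ in $M(I)$. Hereditarity is then the statement that if $0\le z\le y$ in $M(I)$ with $y$ in the image, then $z$ is in the image; this follows because $z^{1/2} = z^{1/2} y^{1/2} (\text{something bounded})$-type estimates (or directly $0\le z\le y \Rightarrow \overline{A}^* z \overline{A}\le \overline{A}^* y \overline{A}\subseteq I$ combined with $I$ hereditary in $M(I)$, plus a polar-decomposition/$C^*$-identity argument to pass from $a^*za\in I$ for all $a$ to $za\in I$ for all $a$). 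I would lean on the standard fact that $I$ is a hereditary ideal in $M(I)$ and that membership "$y\overline{A}\subseteq I$" is preserved downward.

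For (3), the identification $\ker\hat\pi = M(A,I)$ is the crux. One inclusion is easy: if $x\in M(A,I)$ then for $a\in A$, $\hat\pi(x)\pi(a)=\pi(xa)=0$ since $xa\in I=\ker\pi$; as $\pi(A)=A/I$ is dense (in fact equal) and generates, $\hat\pi(x)=0$, so $M(A,I)\subseteq\ker\hat\pi$. Conversely, if $\hat\pi(x)=0$ then $\pi(xa)=\hat\pi(x)\pi(a)=0$ for all $a\in A$, so $xa\in\ker\pi=I$, and symmetrically $ax\in I$; hence $x\in M(A,I)$. This gives exactness at $M(A)$ in the sequence $0\to M(A,I)\to M(A)\xrightarrow{\hat\pi} M(A/I)$. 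Injectivity of $M(A,I)\hookrightarrow M(A)$ is immediate. The remaining point — surjectivity of $\hat\pi:M(A)\to M(A/I)$ when $A$ and $A/I$ are $\sigma$-unital — is \emph{exactly} the "moreover" clause of the noncommutative Tietze Extension Theorem (Theorem \ref{ncT}) applied to $\pi:A\to A/I$, so I would simply cite it.

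\textbf{Main obstacle.} I expect the genuine work to be in (1) and (2): the passage from "$x$ kills $I$" (or "$a^*za\in I$ for all $a$") to "$x$ kills $A$" (resp. "$za\in I$ for all $a$"). The clean way is to use that for $a\in A$, the element $xa$ lies in $I$ (by definition of $M(A,I)$), write $xa$ as a limit of $xa e_\lambda$ with $(e_\lambda)$ an approximate identity for $I$, observe $x(xa)e_\lambda = (x^2 a) e_\lambda$ but also $x\cdot (xae_\lambda)$ involves $x$ acting on an element of $I$... one must be slightly careful to set up the right product so that the hypothesis $\sigma(x)=0$ bites. The alternative is a direct approximate-identity argument: $xa = \lim_\lambda (xa)e_\lambda = \lim_\lambda x(a e_\lambda)$ is not quite it either since $ae_\lambda$ need not be in $I$. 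The correct maneuver is: $\|xa\|^2 = \|(xa)^*(xa)\| = \|a^*x^*xa\|$ and $x^*xa \in I$ (as $x\in M(A,I)$), so $x^*(x^*xa) = 0$ gives $(x^*x)^2 a = 0$... hmm, this needs $x$ self-adjoint. For general $x$ I would instead argue: $\sigma(x^*x)$ acts as $0$ on $I$; $x^*x a \in I$; therefore $(x^*x)(x^*x a) = 0$, i.e. $(x^*x)^2 a = 0$ for all $a$, whence $(x^*x)^2 = 0$ in $M(A)$ and so $x^*x = 0$, i.e. $x=0$. This works and is the argument I would write out. Hereditarity in (2) then follows from the order-theoretic characterization of the image as $\{y\in M(I)_+: \text{-stuff-}\}$ closed downward, together with the fact that a hereditary subalgebra is determined by its positive cone. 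I would keep the write-up short and lean on \cite{Gab16} for the basic properties of $M(A,I)$ already asserted in the paragraph preceding the lemma.
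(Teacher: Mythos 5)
Your parts (1) and (3) are fine. Part (3) is essentially verbatim the paper's argument: the chain of equivalences $\hat{\pi}(y)=0 \Leftrightarrow \pi(y\cdot a)=\pi(a\cdot y)=0$ for all $a\in A \Leftrightarrow y\cdot a,\, a\cdot y\in I$ for all $a \Leftrightarrow y\in M(A,I)$, plus the noncommutative Tietze theorem for surjectivity. For (1), your final argument --- $\sigma(x)=0$ forces $\sigma(x^*x)=0$, so $x^*x$ annihilates $I$ while $(x^*x)\cdot a\in I$ for every $a$, whence $(x^*x)^2\cdot a=0$ for all $a$ and thus $x=0$ by essentiality of $A$ in $M(A)$ --- is correct; the paper instead picks $a$ with $x\cdot a\neq 0$, notes $x\cdot a\in I$ is nonzero, chooses $b\in I$ with $(x\cdot a)b\neq 0$, and contradicts $(x\cdot a)b=x\cdot(ab)=0$ since $ab\in I$. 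Both arguments come down to $I$ being essential in $M(A,I)$, so this is an acceptable variant. (Do discard the earlier false starts in your write-up and keep only the $(x^*x)^2$ argument.)

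The gap is in (2). Your plan is to identify $\sigma(M(A,I))$ with the set $S:=\{y\in M(I): y\bar{a},\ \bar{a}y\in I \text{ for all } a\in A\}$, where $\bar{a}$ denotes the image of $a$ under the canonical map $A\to M(I)$, and then to show $S$ is downward closed in the positive cone. The second step is fine as sketched: $0\le z\le y$ with $y\in S$ gives $\bar{a}^*z\bar{a}\le \bar{a}^*y\bar{a}\in I$, hence $\bar{a}^*z\bar{a}\in I$ since the ideal $I$ is hereditary in $M(I)$, hence $z^{1/2}\bar{a}\in I$ by the $\Cstar$-identity in $M(I)/I$, hence $z\bar{a}\in I$ and symmetrically $\bar{a}z\in I$. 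But that only shows $z\in S$; to conclude $z\in\sigma(M(A,I))$ you need the inclusion $S\subseteq\sigma(M(A,I))$, i.e., that every $y\in S$ actually arises from a multiplier of $A$. That is the nontrivial direction of your claimed identification of the image --- one must check that $a\mapsto y\bar{a}$ and $a\mapsto\bar{a}y$ (both valued in $I\subseteq A$) form a double centralizer on $A$ --- and you flag it ("I would first identify the image") without proving it; nothing else in your write-up substitutes for it. The paper avoids the issue by using the criterion that a $\Cstar$-subalgebra $B\subseteq C$ is hereditary iff $BCB\subseteq B$: given $x_1,x_2\in M(A,I)$ and $y\in M(I)$, it explicitly defines $z\in M(A)$ by $z\cdot a=x_1\cdot(y\cdot(x_2\cdot a))$ and $a\cdot z=((a\cdot x_1)\cdot y)\cdot x_2$, checks $z\in M(A,I)$, and verifies $\sigma(z)=\sigma(x_1)y\sigma(x_2)$; the sandwiching by $x_1,x_2$ makes the multiplier identities immediate. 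Your route works too, but only once the missing double-centralizer verification is written out.
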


\begin{proof}
(i) Since $I$ is an ideal in $M(A,I)$, there is a unique map $\sigma: M(A,I)\to M(I)$ given by $\sigma(x)\cdot b=x\cdot b$ for all $b\in I$. To show this map is injective, it is sufficient to show that $I$ is essential in $M(A,I)$. Let $x\in M(A,I)$ and suppose $x\cdot b=b\cdot x=0$ for all $b\in I$. If $x\neq 0$, then there exists $a\in A$ such that $x\cdot a\neq 0$. Then $x\cdot a\in I$, and so there exists $b\in I$ such that $0\neq (x\cdot a)b=x\cdot (ab)$, which is a contradiction since $ab\in I$.

(ii) To show that $\sigma(M(A,I))$ is hereditary, it suffices to show that $$\sigma(M(A,I))M(I)\sigma(M(A,I))\subset \sigma(M(A,I)).$$ Let $x_1,x_2\in M(A,I)$ and $y\in M(I)$. Define the multiplier $z\in M(A)$ by 
\begin{align*}
    z\cdot a:=x_1\cdot(y\cdot(x_2\cdot a)), \hspace{.5 cm} a\cdot z:=((a\cdot x_1)\cdot y)\cdot x_2
\end{align*}
for each $a\in A$. Then $z\in M(A,I)$, and for all $b\in I$, 
\begin{align*}
\sigma(z)\cdot b&=z\cdot b=x_1\cdot(y\cdot(x_2\cdot b))\\
&=\sigma(x_1)\cdot (y\cdot (\sigma(x_2)\cdot b))\\
&=\sigma(x_1)y\sigma(x_2)\cdot b  
\end{align*}
and similarly $b\cdot \sigma(z)= b\cdot \sigma(x_1)y\sigma(x_2)$. Hence $\sigma(z)=\sigma(x_1)y\sigma(x_2)\in \sigma(M(A,I))$. 

(iii) Let $y\in M(A)$. Then 
$\hat{\pi}(y)=0$ if and only if $\hat{\pi}(y)\cdot \pi(a)=\pi(y\cdot a)=0$ and $\pi(a)\cdot \hat{\pi}(y)=\pi(a\cdot y)=0$ for all $a\in A$ if and only if $y\cdot a, a\cdot y\in I$ for all $a\in A$ if and only if $y\in M(A,I)$. 

If $A$ and $I$ are $\sigma$-unital, then the noncommutative Tietze extension theorem tells us $\hat{\pi}$ is surjective. Hence, we have the desired short exact sequence. 
\end{proof}

\begin{lemma}\label{Gabe}
Any collection $p_1,...,p_n$ of (finitely many) pairwise orthogonal,
non-zero projections in $M(\s Q(\K))$ lift to pairwise orthogonal
projections in $M(\s M(\K))$. 
Moreover, if $p_1,...,p_n$ sum to $1$, then we can arrange for their lifts to do the same. 
\end{lemma}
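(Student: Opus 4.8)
The strategy is to pass to the exact sequence from Lemma~\ref{kernel} and reduce the lifting problem to the standard fact that orthogonal projections in a quotient by a stable ideal lift to orthogonal projections, for which the relevant machinery (functional calculus on self-adjoint lifts, or equivalently the semiprojectivity of $\C^n$ qua $\Cstar$-algebra generated by orthogonal projections) applies. Concretely, set $B=\K\otimes M(\K)$ and $J=\K\otimes\K$; then $B/J=\K\otimes Q(\K)$ since $\K$ is nuclear and exact. Applying the noncommutative Tietze Extension Theorem (Theorem~\ref{ncT}) to $\id_\K\otimes\pi$ gives the surjection $\hat\pi:M(B)\to M(B/J)=M(\s Q(\K))$, and by Lemma~\ref{kernel}(iii) (both $B$ and $B/J$ are $\sigma$-unital, indeed $B$ is $\sigma$-unital because $\K\otimes M(\K)$ contains an increasing approximate unit from $\K\otimes\K$ with... actually one must check $\sigma$-unitality of $B$ directly: $\K$ has a countable approximate unit and $M(\K)$ is unital, so $\K\otimes M(\K)$ is $\sigma$-unital) we get the short exact sequence $0\to M(B,J)\to M(B)\xrightarrow{\hat\pi} M(\s Q(\K))\to 0$.

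The first key step is to identify what we are lifting against: by Lemma~\ref{kernel}(i)--(ii), $M(B,J)$ embeds as a hereditary (in particular, $\sigma$-unital where needed) $\Cstar$-subalgebra of $M(J)=M(\K\otimes\K)$, which is a properly infinite, indeed stable-ideal-multiplier situation. The point is that $M(B)$ has real rank zero or, more to the point, that projections can be perturbed: given self-adjoint lifts $x_1,\dots,x_n\in M(B)$ of the $p_i$ with $\sum x_i$ a self-adjoint lift of $1$, the images $\hat\pi(x_i x_j)=p_ip_j=0$ for $i\neq j$ and $\hat\pi(x_i^2-x_i)=0$ lie in $M(B,J)$. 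So I would first replace the $x_i$ by a genuine partition of unity into positive contractions (possible since $M(B)$ is unital and we can take $x_i\ge 0$, $\sum x_i=1$ by averaging/normalizing), reducing the error $\sum_{i\ne j}\|\hat\pi\|$-estimates to the ideal. Then orthogonality and projection-ness are corrected by the standard semiprojectivity argument for the finite-dimensional commutative $\Cstar$-algebra $\C^n$: the relations "$p_i=p_i^*=p_i^2$, $p_ip_j=0$, $\sum p_i=1$" are liftable from any quotient by any ideal because $\C^n$ is semiprojective, and more elementarily because a near-partition of unity in a $\Cstar$-algebra can be pushed to an exact partition by continuous functional calculus applied to the (commuting up to the ideal) family, then orthogonalized using the fact that two positive elements with small product have nearby orthogonal corrections. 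The "summing to $1$" refinement comes for free from this approach since we start with $\sum x_i=1$ exactly and the corrections preserve the sum if we correct the last projection by $p_n'=1-\sum_{i<n}p_i'$ after orthogonalizing the first $n-1$.

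I expect the main obstacle to be the bookkeeping in the orthogonalization step when $n>2$: making the corrections to $p_1,\dots,p_{n-1}$ mutually orthogonal \emph{and} each orthogonal to the eventual $p_n'=1-\sum_{i<n}p_i'$ requires an inductive perturbation where at each stage one projects the next candidate into the complement of the span of the previous ones, controlling norms so the final object is still a lift of the original $p_i$. The cleanest way to sidestep the combinatorics is to invoke semiprojectivity of $\C^n$ directly (this is classical — $\C^n$ is even projective-in-the-nonunital-sense and certainly semiprojective), applied to the surjection $M(B)\to M(B)/M(B,J)\hookrightarrow M(\s Q(\K))$: a unital $\ast$-homomorphism $\C^n\to M(\s Q(\K))$ whose image lies in the sub-$\Cstar$-algebra $M(B)/M(B,J)$ lifts along $M(B)\to M(B)/M(B,J)$, which is exactly the assertion. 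The only thing to verify carefully is that the $p_i$ do land in $M(B)/M(B,J)\cong\hat\pi(M(B))=M(\s Q(\K))$ — but $\hat\pi$ is surjective by Tietze, so this is automatic — and that semiprojectivity of $\C^n$ is being applied against a genuine quotient map, which it is. So the proof reduces to: (1) set up $B$, $J$, verify $\sigma$-unitality and that $B/J=\K\otimes Q(\K)$; (2) invoke Lemma~\ref{kernel} for the exact sequence; (3) apply semiprojectivity of $\C^n$ to lift the partition of unity.
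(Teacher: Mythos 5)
Your setup (the short exact sequence $0\to M(B,J)\to M(B)\to M(\s Q(\K))\to 0$ with $B=\K\otimes M(\K)$ and $J=\K\otimes\K$, obtained from Theorem \ref{ncT} and Lemma \ref{kernel}) matches the paper's starting point, but the step that is supposed to do all the work fails. You assert that the relations ``$p_i=p_i^*=p_i^2$, $p_ip_j=0$, $\sum p_i=1$'' lift along \emph{any} quotient because $\C^n$ is semiprojective, or even ``projective in the nonunital sense.'' Neither claim is usable. Semiprojectivity only provides lifts along quotients by ideals of the form $\overline{\bigcup_k J_k}$ up to a finite stage; it says nothing about a single arbitrary quotient map. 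And $\C^n$ is certainly not projective: a projection in a quotient need not lift to a projection (evaluation of $C([0,1])$ at the two endpoints maps onto $\C\oplus\C$, which has nontrivial projections while $C([0,1])$ has none). The ``more elementary'' functional-calculus fix fails for the same reason: a positive contraction $x$ with $x^2-x$ in the ideal can have spectrum all of $[0,1]$, and there is no continuous $\{0,1\}$-valued $f$ on $[0,1]$ with $f(0)=0$ and $f(1)=1$, so no functional-calculus perturbation of $x$ is a projection lifting $\pi(x)$. Lifting projections is genuinely obstructed, and the obstruction is $K$-theoretic; your argument never engages with it.

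The paper's proof is precisely about killing that obstruction for this particular extension. After lifting the $p_i$ to \emph{pairwise orthogonal positive contractions} $x_i$ via \cite[Theorem 4.6]{Lor91} (a step compatible with your outline), it passes to the corner extensions $0\to \overline{x_iM(B,J)x_i}\to\overline{x_iM(B)x_i}\to p_iM(\s Q(\K))p_i\to 0$, shows the ideal has real rank zero (it is hereditary in $M(\s \K)\simeq\mathcal{B}(\ell^2)$ by Lemma \ref{kernel} and \cite[Corollary 2.8]{BP91}), shows $K_0$ of the quotient corner vanishes (using simplicity and pure infiniteness of $\s Q(\K)$, Brown's stable isomorphism theorem, and the triviality of $K$-theory of multiplier algebras of stable $\Cstar$-algebras), and only then invokes the lifting result \cite[Corollary 3.15]{BP91}. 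None of this input appears in your proposal, so the central step is a genuine gap rather than a shortcut.
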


\begin{proof}
By \cite[Theorem 4.6]{Lor91} (which is \cite[Proposition 2.6]{AP} with norm adjustments), we can lift $p_1,...,p_n$ to pairwise orthogonal positive contractions $x_1,...,x_n\in M(\s M(\K))$. For $i=1,...,n$ define 
\begin{align*}
    I_i&:=\overline{x_iM(\s M(\K) , \s \K)x_i},\\
    A_i&:=\overline{x_iM(\s M(\K))x_i},\ \text{and}\\
    B_i&:= p_iM(\s Q(\K))p_i.
\end{align*}
Then for each $i=1,...,n$ we have a short exact sequence 
$$0\to I_i\to A_i\to B_i\to 0.$$
Since the $x_i$ are pairwise orthogonal, if we can lift each $p_i$ to projections in $A_i$, these lifts will be pairwise orthogonal. 

Fix $1\leq i\leq n$. Since $I_i$ is a hereditary $\Cstar$-subalgebra of $M(\s M(\K), \K\otimes\K)$, 
it follows from Lemma \ref{kernel} that $I_i$ embeds as a hereditary $\Cstar$-subalgebra of $M(\s \K)$. Since $M(\s \K)\simeq \mathcal{B}(\ell^2)$ is real rank zero, and since real rank zero passes to hereditary subalgebras (\cite[Corollary 2.8]{BP91}), it follows that $I_i$ has real rank zero. If we can show that $K_0(B_i)=0$, then we can use Corollary 3.15 from \cite{BP91} to conclude that every projection in $B_i$ (in particular $p_i$) lifts to a projection in $A_i$. 

Because $\s Q(\K)$ is simple, $\sigma$-unital, and purely infinite, it follows that $\frac{M(\s Q(\K))}{\s Q(\K)}$ is simple by \cite[Theorem 3.2]{Ror91} or alternatively by \cite[Theorem 3.8]{Lin91}; moreover, $\s Q(\K)$ is the unique closed two sided ideal in $M(\s Q(\K))$.  
So $B_i$ is a full hereditary subalgebra of either $M(\s Q(\K))$ or $\s Q(\K)$, and all three are $\sigma$-unital. Then by Brown's stable isomorphism theorem (\cite[Theorem 2.8]{Bro77}), either $\s B_i\simeq \s M(\s Q(\K))$ or $\s B_i\simeq \s Q(\K)$, which means we have either $K_0(B_i)\simeq K_0(Q(\K))=0$ or $K_0(B_i)\simeq K_0(M(\s Q(\K))$. Since multiplier algebras of stable $\Cstar$-algebras have trivial K-theory (\cite[Proposition 12.2.1]{Bla98}), either way, we have $K_0(B_i)=0$ and can now invoke \cite[Corollary 3.15]{BP91} to finish the argument. 

Finally, if $\sum_{i=1}^n p_i=1$, and $q_1,...,q_n$ are orthogonal lifts of $p_1,...,p_n$, then $q_1, q_2,...,q_{n-1}, (1-\sum_{i=1}^n q_i)$ are orthogonal lifts of $p_1,...,p_n$ that sum to $1$.
\end{proof}

\begin{corollary}\label{lifting CN}
For any $N\geq 1$, any unital $^*$-homomorphism $\phi:\C^N\to M(\s Q(\K))$ lifts to a unital $^*$-homomorphism $\psi:\C^N\to M(\s M(\K))$. 
\end{corollary}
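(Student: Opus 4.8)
The plan is to unwind the structure of a unital $^*$-homomorphism from the finite-dimensional abelian $\Cstar$-algebra $\C^N$. Such a map is determined precisely by the images of the minimal projections $e_1,\dots,e_N$ of $\C^N$: these are pairwise orthogonal projections in $M(\s Q(\K))$ summing to $1$. Conversely, any choice of pairwise orthogonal projections $p_1,\dots,p_N$ in a unital $\Cstar$-algebra with $\sum_j p_j = 1$ defines a unital $^*$-homomorphism $\C^N \to$ that algebra by $e_j \mapsto p_j$. So the statement is literally a reformulation of the lifting problem for systems of orthogonal projections summing to one.

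First I would let $\phi:\C^N\to M(\s Q(\K))$ be a given unital $^*$-homomorphism and set $p_j = \phi(e_j)$ for $j = 1,\dots,N$, where $e_1,\dots,e_N$ are the minimal central projections of $\C^N$. These are pairwise orthogonal projections in $M(\s Q(\K))$ with $\sum_{j=1}^N p_j = \phi(1) = 1$. If some $p_j = 0$, simply drop it (the corresponding lift is $0$); so we may assume they are all nonzero. Then I would invoke Lemma \ref{Gabe}, which is exactly the statement that finitely many pairwise orthogonal nonzero projections in $M(\s Q(\K))$ lift to pairwise orthogonal projections in $M(\s M(\K))$, and moreover that if they sum to $1$ the lifts can be chosen to sum to $1$. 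This gives pairwise orthogonal projections $q_1,\dots,q_N \in M(\s M(\K))$ with $\hat\pi(q_j) = p_j$ and $\sum_{j=1}^N q_j = 1$. Finally, I would define $\psi:\C^N\to M(\s M(\K))$ to be the unital $^*$-homomorphism determined by $\psi(e_j) = q_j$; that this is a well-defined $^*$-homomorphism follows from the orthogonality and the sum-to-one condition (it is just the map $(\lambda_1,\dots,\lambda_N)\mapsto \sum_j \lambda_j q_j$). The verification $\hat\pi\psi = \phi$ is immediate on the generators $e_j$ and hence everywhere by linearity and multiplicativity.

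There is really no main obstacle here: all the work has already been done in Lemma \ref{Gabe}, and this corollary is a purely formal translation from the language of orthogonal-projection systems to the language of $^*$-homomorphisms out of $\C^N$. The only point requiring a word of care is the reduction to the case where all the $p_j$ are nonzero, so that Lemma \ref{Gabe} applies verbatim — but a zero projection lifts trivially to $0$, and dropping it does not affect either the orthogonality or (after one reindexes) the sum being $1$, so this is routine.
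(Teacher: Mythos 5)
Your proposal is correct and is precisely the argument the paper intends (the corollary is stated without proof, as an immediate consequence of Lemma \ref{Gabe}): translate the unital $^*$-homomorphism into its system of pairwise orthogonal projections summing to one, lift via Lemma \ref{Gabe}, and translate back. Your remark about discarding any zero projections before applying the lemma is a worthwhile point of care, since the lemma is stated for non-zero projections.
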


\begin{theorem}\label{condproj}
Suppose $A$ is a separable unital $\Cstar$-algebra with a conditionally projective unital $^*$-homomorphism $\alpha:\C^N\to A$ for some $N\geq 1$. Then $A$ has the LLP.
\end{theorem}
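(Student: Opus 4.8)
The plan is to invoke the characterization of the LLP from Proposition \ref{LLP}: it suffices to show that any unital $^*$-homomorphism $\rho:A\to M(\s Q(\K))$ lifts to a ucp map $\theta:A\to M(\s M(\K))$ with $\hat\pi\theta=\rho$. So fix such a $\rho$. First I would push $\rho$ down to $\C^N$ by composing with $\alpha$: the composite $\rho\circ\alpha:\C^N\to M(\s Q(\K))$ is a unital $^*$-homomorphism. By Corollary \ref{lifting CN} (which packages Lemma \ref{Gabe}), $\rho\circ\alpha$ lifts to a unital $^*$-homomorphism $\psi_0:\C^N\to M(\s M(\K))$, so we have a commuting square
\[
\begin{tikzcd}
\C^N \arrow[swap]{d}{\alpha}\arrow{r}{\psi_0} & M(\s M(\K))\arrow{d}{\hat\pi}\\
A \arrow[swap]{r}{\rho} & M(\s Q(\K)).
\end{tikzcd}
\]

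Now I want to use conditional projectivity of $\alpha$ to promote this to a $^*$-homomorphism lift of $\rho$ itself. The subtlety is that Definition \ref{ELP} applies to a quotient map $B\to B/I$, whereas $\hat\pi:M(\s M(\K))\to M(\s Q(\K))$ need not be surjective (the noncommutative Tietze theorem gives surjectivity only in the $\sigma$-unital case, and $M(\K)$ is not $\sigma$-unital as a $\Cstar$-algebra in the relevant sense). So the first step is to replace $\hat\pi$ by an honest quotient map onto a subalgebra: let $B:=\hat\pi^{-1}\big(\rho(A)\big)\subseteq M(\s M(\K))$ and $I:=\ker\hat\pi=M(\s M(\K),\s\K)$ by Lemma \ref{kernel}(iii). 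Then $\hat\pi|_B:B\to \rho(A)\cong A/\ker\rho$ is a genuine surjection with kernel $I$, and $\psi_0$ still maps $\C^N$ into $B$ since $\hat\pi\psi_0=\rho\alpha$ takes values in $\rho(A)$. Applying the defining diagram of conditional projectivity to $C=\C^N$, $B$ as above, the quotient $B\to B/I\cong\rho(A)$, the map $\phi=\rho$ (viewed into $\rho(A)$), and $\rho_{\mathrm{diag}}=\psi_0$, we obtain a $^*$-homomorphism $\psi:A\to B\subseteq M(\s M(\K))$ with $\hat\pi\psi=\rho$. Since a $^*$-homomorphism is in particular ucp, taking $\theta=\psi$ finishes the verification of the hypothesis of Proposition \ref{LLP}, and hence $A$ has the LLP.

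I expect the main obstacle to be exactly the surjectivity bookkeeping in the previous paragraph: one must check that $B=\hat\pi^{-1}(\rho(A))$ is a $\Cstar$-algebra (it is, being a preimage of a $\Cstar$-subalgebra under a $^*$-homomorphism), that $\hat\pi|_B$ really is the quotient map by $I=\ker\hat\pi$ under the identification $B/I\cong\rho(A)$, and that the map $\rho:A\to\rho(A)$ together with $\psi_0:\C^N\to B$ genuinely forms the square required in Definition \ref{ELP} (the lower-triangle commutation $\hat\pi\psi_0=\rho\alpha$ is automatic from Corollary \ref{lifting CN}). None of this is deep, but it is the place where the argument could go wrong if one is cavalier about $\hat\pi$ not being globally surjective. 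A cleaner alternative, if one prefers to avoid restricting the codomain, is to note that Corollary \ref{lifting CN} already produces the lift $\psi_0$ of $\rho\alpha$, so one only needs conditional projectivity relative to the (possibly non-surjective) map $\hat\pi$; if Definition \ref{ELP} is read — as the surrounding discussion suggests — as only requiring the existence of a $^*$-homomorphism lift of $\phi$ whenever $\phi\circ\alpha$ admits one, with the diagram indicating the relevant data rather than insisting $\pi$ be onto, then the conclusion is immediate and no restriction of codomain is needed. Either way, the one-line heart of the proof is: lift through $\C^N$ using Corollary \ref{lifting CN}, then transport the lift up to $A$ using conditional projectivity of $\alpha$, and conclude via Proposition \ref{LLP}.
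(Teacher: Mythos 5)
Your argument is exactly the paper's proof: lift $\rho\alpha$ through $\C^N$ via Corollary \ref{lifting CN}, use conditional projectivity of $\alpha$ to promote this to a $^*$-homomorphism lift of $\rho$, and conclude with Proposition \ref{LLP}. The surjectivity worry is moot: the relevant algebra for the noncommutative Tietze theorem is $\s M(\K)$, which is $\sigma$-unital (it contains $h\otimes 1$ with $h$ strictly positive in $\K$), so $\hat{\pi}$ is already a genuine quotient map and Definition \ref{ELP} applies directly, without restricting the codomain.
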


\begin{proof}
Let $\rho:A\to M(\s Q(\K))$ be a unital $^*$-homomorphism. Then $\rho\alpha:\C^N\to M(\s Q(\K))$ is a unital $^*$-homorphism, and hence has a lift by Corollary \ref{lifting CN}. By assumption, this implies $\rho$ lifts to a unital $^*$-homomorphism, which completes the proof by Proposition \ref{LLP}.
\end{proof}

\begin{corollary}\label{polyLP}
Let $C>0$ and $p\in \C [z]$ any non-constant polynomial with a root of modulus strictly less than $C$. Then $A=\Cstar \langle x: \|x\|\leq C,\ p(x)=0\rangle$ has the LLP. \end{corollary}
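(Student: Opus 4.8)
The plan is to read off Corollary~\ref{polyLP} directly from Theorem~\ref{condproj} together with the Loring--Shulman construction recorded in the Example following Definition~\ref{ELP}. Write $p(z) = a\prod_{i=1}^{N}(z-a_i)^{k_i}$ with $a_1,\dots,a_N \in \C$ pairwise distinct and $a\neq 0$; the hypothesis that $p$ has a root of modulus strictly less than $C$ says that some $|a_i| < C$, which is precisely the condition guaranteeing that $A = \Cstar\langle x : \|x\|\leq C,\ p(x)=0\rangle$ admits a nonzero representation and hence exists. By \cite[Remark~12]{LS14} there is then a conditionally projective unital $^*$-homomorphism $\alpha\colon \C^{N}\to \tilde{A}$, where $N$ is the number of distinct roots of $p$.

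Next I would observe that $A$ is separable: it is singly generated, and the relation $\|x\|\leq C$ bounds the generator, so $A$ is the closed linear span of a countable set of words in $x$ and $x^*$. Consequently $\tilde{A}$ is a separable unital $\Cstar$-algebra carrying the conditionally projective map $\alpha$, so Theorem~\ref{condproj} applies and yields that $\tilde{A}$ has the LLP. Since, by Definition~\ref{defLLP}, a $\Cstar$-algebra has the LLP exactly when its unitization does, it follows that $A$ has the LLP. (If $p$ has nonzero constant term then $1\in\Cstar(x)$, so $A$ is already unital, $\tilde{A}=A$, and this last reduction is vacuous.)

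I do not expect any genuine obstacle: the entire mathematical content lies in Theorem~\ref{condproj} and in the explicit construction of the conditionally projective map $\C^N\to\tilde{A}$ in \cite{LS14}, both of which we are entitled to invoke. The only points deserving a word are the separability of $A$ and the passage between $A$ and its unitization, both of which are routine.
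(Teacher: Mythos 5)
Your proof is correct and follows exactly the paper's route: the paper's own proof of Corollary~\ref{polyLP} is the single observation that \cite[Remark~12]{LS14} provides a conditionally projective map $\C^N\to\tilde{A}$, after which Theorem~\ref{condproj} and the unitization convention in Definition~\ref{defLLP} do the rest. Your added remarks on separability and the passage to $\tilde{A}$ are accurate fillings-in of details the paper leaves implicit.
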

The special case for $p(x)=x^n$ follows from the fact that the associated $\Cstar$-algebra is projective (\cite{Shu08}) and Proposition \ref{Proj implies LLP}.
\begin{proof}
From \cite[Remark 12]{LS14}, we know there is a conditionally projective map $\C^{N}\to \tilde{A}$ for some $N\in \N$. 
\end{proof}

As we have already remarked, any projective or nuclear $\Cstar$-algebra has the LLP (using Corollary \ref{Proj implies LLP} and the Choi-Effros Lifting theorem, respectively). However, ``most" of the $\Cstar$-algebras from Corollary \ref{polyLP} do not fall under either of these labels. From \cite{Shu08}, we know that $A=\Cstar \langle x: \|x\|\leq 1,\ p(x)=0\rangle$ is projective if and only if $p(x)=x^n$ for some $n\geq 1$. 
As for nuclearity, although some of these algebras are nuclear, e.g. $\Cstar \langle x: \|x\|\leq 1,\ x^2=0\rangle\simeq C_0((0,1], \mathbb{M}_2(\C))$ (\cite{Lor97}), most are not. 

\begin{proposition}
For each $\lambda\in \C\backslash\{0\}$ and $C>|\lambda|$, there exists an $N>0$ such that $\Cstar \langle x: \|x\|\leq C,\ p(x)=0\rangle$ is not nuclear (and in fact non-exact) whenever $(x-\lambda)^N\ |\ p(x)$.
\end{proposition}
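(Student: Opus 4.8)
The plan is to reduce the statement to the non-exactness, for $M$ large, of the single universal algebra $A_{M,\lambda}:=\Cstar\langle x:\|x\|\le C,\ (x-\lambda)^M=0\rangle$, and feed this into a direct-summand argument.

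\emph{Step 1: reduction to a single root.} Suppose $(x-\lambda)^N\mid p(x)$ and let $M\ge N$ be the exact multiplicity of $\lambda$ in $p$, so $p(z)=(z-\lambda)^Mq(z)$ with $q(\lambda)\ne 0$. By coprimality of $(z-\lambda)^M$ and $q(z)$ in $\C[z]$, pick $a,b\in\C[z]$ with $a(z)(z-\lambda)^M+b(z)q(z)=1$, and put $e:=b(x)q(x)$ in $A:=\Cstar\langle x:\|x\|\le C,\ p(x)=0\rangle$. Then $e(1-e)=a(x)b(x)p(x)=0$, so $e$ is an idempotent; being a polynomial in the single generator it is central, hence a projection. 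Since $(x-\lambda)^Me=b(x)p(x)=0$ and $q(x)(1-e)=a(x)p(x)=0$, one checks that $eA\cong A_{M,\lambda}$, the generator of $A_{M,\lambda}$ corresponding to $xe$ (the induced $*$-homomorphism is onto because $\lambda\ne 0$ makes $xe$ invertible in $eA$, and isometric because every representation of $A_{M,\lambda}$ is a representation of $A$ annihilating $1-e$). Thus $A\cong A_{M,\lambda}\oplus(1-e)A$, so $A_{M,\lambda}$ is a direct summand, in particular a $\Cstar$-subalgebra, of $A$. As nuclearity passes to quotients and to corners while exactness passes to $\Cstar$-subalgebras, it suffices to choose $N$ so that $A_{M,\lambda}$ is non-nuclear (resp.\ non-exact) for every $M\ge N$.

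\emph{Step 2: non-nuclearity of $A_{M,\lambda}$.} For a finite directed acyclic graph $G$ on $M$ vertices with unitaries $(u_\epsilon)$ indexed by its edges, the operator $Y:=\sum_\epsilon E_{h(\epsilon)\,t(\epsilon)}\otimes u_\epsilon$ on $\C^M\otimes\H$ is strictly triangular in a topological vertex order, hence nilpotent of order $\le M$, with $\|Y\|$ bounded in terms of $G$ only; so for $\delta>0$ small the element $x:=\lambda\cdot 1+\delta Y$ satisfies $\|x\|\le C$ and $(x-\lambda)^M=0$ and thus defines a representation of $A_{M,\lambda}$. Taking $G$ whose underlying undirected graph has at least two independent cycles — available for all $M$ past an absolute threshold, and for larger $M$ by padding $G$ with isolated vertices — and choosing the $u_\epsilon$ (after arranging the vertex in- and out-degrees so that the relevant matrix units $E_{vv}\otimes 1$ lie in the image) so that the two cycle-holonomies are free unitaries, a holonomy computation realizes a $\Cstar$-completion of $\C[\F_2]$ as a corner of the image; since every such completion surjects onto $\Cstar_r(\F_2)$ and is therefore non-nuclear, and non-nuclearity passes to corners and quotients, $A_{M,\lambda}$ is non-nuclear. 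Since $(x-\lambda)^M=0$ implies $p(x)=0$, we have $A\twoheadrightarrow A_{M,\lambda}$, so $A$ is non-nuclear; taking $N$ to be that threshold proves the main assertion.

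\emph{Step 3: the parenthetical, and the main obstacle.} For the stronger statement I must produce, for $M\ge N$, a genuinely non-exact $\Cstar$-\emph{subalgebra} of $A_{M,\lambda}$: the representation in Step 2 yields only a non-exact \emph{quotient}, and exactness of $\Cstar$-algebras is not inherited by quotients, so that is not enough. I expect this to be the crux. The route I would try is to show that the universal $\Cstar$-algebra of a contraction, $\A=\Cstar_u\langle x:\|x\|\le 1\rangle$ — which is non-exact by \cite{CShe} — embeds into $A_{M,\lambda}$ once $M$ is large, i.e.\ to exhibit $w\in A_{M,\lambda}$ with $\|w\|\le 1$ such that $\|f(w)\|_{A_{M,\lambda}}$ equals the universal norm of $f$ over all contractions for every noncommutative $*$-polynomial $f$; failing that, one invokes directly the structure theory of these algebras in \cite{CShe}, which gives non-exactness of $A_{M,\lambda}$ for all sufficiently large $M$. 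With either input, Step 1 finishes the proof, since a direct summand is a $\Cstar$-subalgebra, $\Cstar$-subalgebras of exact $\Cstar$-algebras are exact, and non-exact implies non-nuclear.
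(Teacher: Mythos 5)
There are two genuine problems with your argument. First, the direct-summand reduction in Step~1 fails: the element $e=b(x)q(x)$ is an idempotent, but it is a polynomial in $x$ alone, so it commutes with $x$ but not with $x^*$; it is neither self-adjoint nor central, and $A$ does not decompose as $eA\oplus(1-e)A$. Concretely, for $p(z)=z(z-\lambda)$ the operator $T=\left(\begin{smallmatrix}0&1\\0&\lambda\end{smallmatrix}\right)$ satisfies the relations and generates all of $\mathbb{M}_2(\C)$, which has no nontrivial central projections, so no representation-independent splitting of this kind can exist. (The Riesz idempotent of an algebraic operator is a projection only when the operator is normal.) Moreover, the premise that forced you into this detour is itself wrong: exactness \emph{does} pass to quotients --- this is a deep theorem of Kirchberg (see \cite[Corollary 9.4.4]{BO}) and is exactly what the paper invokes. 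The canonical surjection $A\twoheadrightarrow A_{\lambda,N}$ coming from the implication $(x-\lambda)^N=0\Rightarrow p(x)=0$ therefore already suffices for both the nuclearity and the exactness statements; no subalgebra is needed.

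Second, the actual crux --- that $A_{\lambda,n}$ is non-exact for all sufficiently large $n$ --- is not established. Your Step~2 rests on the unproved assertion that every $\Cstar$-completion of $\C[\F_2]$ surjects onto $\Cstar_r(\F_2)$ (equivalently, that the reduced norm is the minimal $\Cstar$-norm on $\C[\F_2]$), and the graph/holonomy construction producing free unitaries inside the image is only sketched; and in any case a non-nuclear quotient or corner of a representation image only rules out nuclearity of that image, so you would still owe an argument at the level of the universal algebra. Step~3 then defers the exactness claim to unspecified ``structure theory.'' The paper's route is concrete: by (the proof of) \cite[Theorem 5.11]{CShe}, a faithful unital representation $\rho$ of $\A=\Cstar_u\langle x:\|x\|\leq 1\rangle$ asymptotically factors through the system $\{A_{\lambda,n}\}_n$ via $^*$-homomorphisms $\phi_n\psi_n\to\rho$ pointwise in norm; if infinitely many $A_{\lambda,n}$ were exact, the maps $\phi_n$ into $\mathcal{B}(\H)$ would be nuclear, hence so would $\rho$, contradicting the non-exactness of $\A$ (which contains $\Cstar(\F_2)$ by \cite[Theorem 6.10]{CShe}). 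You would need to supply this, or an equivalent input, to close the argument.
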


\begin{proof}
Fix $\lambda\neq 0$ and $C>|\lambda|$. After scaling, we can assume $C=1$. For each $n\geq 1$, write $A_{\lambda, n}=\Cstar _u\langle x: \|x\|\leq C,\ (x-\lambda)^n=0\rangle$. If $(x-\lambda)^n\ |\ p(x)$, then $\Cstar _u\langle x: \|x\|\leq C,\ p(x)=0\rangle$ surjects onto $A_{\lambda, n}$, so it suffices to prove that $A_{\lambda,N}$ is not exact for some $N$. (This is because quotients of exact $\Cstar$-algebras are always exact, a deep result due to Kirchberg-- see \cite[Corollary 9.4.4]{BO} for an argument.) 

Recall that $\A$ denotes the universal unital $\Cstar$-algebra of a contraction. 
Though it is not explicitly stated, in the proof of Theorem 5.11 in \cite{CShe}, there is an intermediate result generalizing \cite[Theorem 5.9]{CShe} which says that any faithful unital representation $\rho:\A\to \mathcal{B}(\H)$ asymptotically factors through $\{A_{\lambda, n}\}_n$, i.e. there exist a sequence of $^*$-homomorphisms $\psi_n:\A\to A_{\lambda, n}$ and $\phi_n:A_{\lambda, n}\to \mathcal{B}(\H)$ so that $\phi_n\psi_n\to \rho$ pointwise in norm. If $A_{\lambda, n}$ were exact for more than finitely many $n$, then it would follow that $\phi_n$ are nuclear for all $n$, and hence $\rho$ would also be nuclear. This would imply that $\A$ is exact, but this is a contradiction (for instance, we know $\Cstar (\F_2)$ embeds into $\A$ -- see e.g. \cite[Theorem 6.10]{CShe}). 
\end{proof}

We do not know if the $\Cstar$-algebras from Corollary \ref{polyLP} have the LP. Because each is unitized and singly generated, it would suffice to prove  that the identification with a quotient of $\Cstar (\F_2)$ lifts to a cpc map. While we cannot say in general whether this map has a cpc lift, we can say that it has no $^*$-homomorphism lift. Indeed, the $\Cstar$-algebras from Corollary \ref{polyLP} have nontrivial projections, such as those named in \cite[Corollary 6]{LS14}, but $\Cstar (\F_2)$ has no nontrivial projections (\cite[Theorem 1]{Cho80}).


\section{Pythagorean $\Cstar$-algebras}\label{Pn}
In this section, we prove that for each $n>1$, the Pythagorean $\Cstar$-algebra 
$$\mathcal{P}_n=\Cstar \langle x_1,...,x_n : \textstyle{\sum} x_i^*x_i=1\rangle$$
has the LP. (This also holds for $n=1$, but in this case $P_n$ is the Toeplitz algebra, which is nuclear and has the LP by the Choi-Effros lifting theorem.) The key idea will again be a dilation argument, but this time we will be dilating elements of a $\Cstar$-algebra instead of maps between $\Cstar$-algebras. 

\begin{proposition}\label{Pn has LP}
For each $n> 1$, $\mathcal{P}_n$ has the LP. 
\end{proposition}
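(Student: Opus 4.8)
The plan is to prove that $\P_n$ has the LP by using Proposition~\ref{LP from F}: since $\P_n$ is separable and unital, it suffices to fix a presentation of $\P_n$ as a quotient $\Cstar(\F)/J$ of a countably generated free group $\Cstar$-algebra and show that the quotient $^*$-isomorphism lifts to a ucp map. Equivalently, by the remarks in Section~\ref{Prelim: LP}, it suffices to show that whenever $B$ is a unital $\Cstar$-algebra with closed two-sided ideal $J$, quotient map $q:B\to B/J$, and $y_1,\dots,y_n\in B/J$ satisfy $\sum y_j^*y_j = 1$, there is a ucp map $\psi:\P_n\to B$ lifting the induced $^*$-homomorphism $\P_n\to B/J$, $x_j\mapsto y_j$. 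The idea is to first \emph{dilate} the tuple $(y_1,\dots,y_n)$ inside the quotient to a nicer tuple that lifts on the nose to a $^*$-homomorphism, then pull the dilation back through the resulting $^*$-homomorphism via a compression, which is automatically ucp.

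Concretely, here is the dilation step I would carry out. Set $y = (y_1,\dots,y_n)^{\mathrm{T}}$, viewed as a column, so that $\sum y_j^* y_j = 1$ says the row$\times$column product is $1$, i.e.\ $y$ is an ``isometry'' from $B/J$ into $(B/J)^n$ in the Hilbert-module sense; the projection $P := yy^*\in M_n(B/J)$ (with $(P)_{ij} = y_i y_j^*$) is the range projection. The key observation is that the universal object for $n$ \emph{isometries} with orthogonal ranges summing to $1$ is the Cuntz algebra $\mathcal{O}_n$, but more usefully here, the universal object for a single row isometry, i.e.\ for the relation that the $y_j$ are the columns of a co-isometry, relates to $M_n$-amplifications. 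I would look for an explicit $2\times 2$ (or $M_n$-type) dilation analogous to Halmos' unitary dilation: replace each $y_j$ by a larger operator $z_j$ acting on $(B/J)\oplus(\text{something})$ so that the $z_j$ become columns of a genuine \emph{unitary} in $M_n$ over a larger algebra, or at least so that the tuple $(z_j)$ satisfies a presentation that is visibly projective/liftable. Since $\P_1 = \mathcal{T}$ is the Toeplitz algebra and is nuclear, and since the relation $\sum z_j^* z_j = 1 = \sum z_j z_j^*$ (Cuntz-like, adding the range condition) would make the dilated tuple generate a quotient of $\mathcal{O}_n$, the relevant point is that \emph{coisometries} — and more to the point, tuples forming a row unitary — lift: a unitary in a quotient lifts to a unitary, hence the columns of a unitary in $M_n(B/J)$ lift to the columns of a unitary in $M_n(B)$. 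So the plan is: dilate $(y_j)_j$ in $B/J$ to $(z_j)_j$ in $B/J$ (or in $B/J$ tensored/amplified) where the $z_j$ are columns of a unitary $U\in M_n(B/J)$; lift $U$ to a unitary $\tilde U\in M_n(B)$ whose columns $\tilde z_j$ still satisfy $\sum \tilde z_j^* \tilde z_j = 1$; this gives a $^*$-homomorphism $\sigma:\P_n\to M_n(B)$, $x_j\mapsto \tilde z_j$, lifting $x_j\mapsto z_j$; finally compress by the corner corresponding to the original summand to recover $y_j$ from $z_j$ — i.e.\ $\psi := (\text{compression})\circ\sigma$ is ucp, lands in (a corner of $M_n(B)$ identified with $B$ if one is careful, or in $M_n(B)$), and lifts $x_j\mapsto y_j$ after identifying corners. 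One then uses that a compression of a $^*$-homomorphism by a self-adjoint projection is ucp, together with compatibility of the compression with the quotient map $q\otimes\id_{M_n}$, to conclude $\psi$ is the desired ucp lift.

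The step I expect to be the main obstacle is producing the \emph{dilation inside the quotient} with the right properties — specifically, dilating an arbitrary tuple with $\sum y_j^*y_j=1$ to a tuple that forms (part of) a unitary matrix, \emph{while keeping track of how the original $y_j$ are recovered by a single fixed corner compression} and ensuring that the enlarged algebra is still a quotient of something whose generating relations lift. Halmos' dilation $\begin{pmatrix} y & (1-yy^*)^{1/2}\\ (1-y^*y)^{1/2} & -y^*\end{pmatrix}$ handles one contraction; here I need the analogous move for a column partial isometry, and the ``defect'' terms will involve $1 - \sum y_i y_j^*$-type expressions in $M_n$, so the bookkeeping of which coordinates play which role is the delicate part. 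A secondary obstacle is making sure the corner compression used at the end is compatible with the ideal $J$ — that is, that $\pi$ applied to the lifted and compressed map really gives back $x_j\mapsto y_j$ and not something twisted by the dilation — which is why I would fix all identifications ($B \hookrightarrow M_n(B)$ as the $(1,1)$-corner, or as $e_{11}\otimes B$, etc.) at the very start and check naturality of compression under $q$. Once the dilation is in hand, the lifting of unitaries through quotients and the ucp-ness of compressions are standard, and Proposition~\ref{LP from F} closes the argument; the passage from ``$^*$-homomorphisms from $\P_n$ lift to ucp maps'' to ``$\P_n$ has the LP'' is exactly the content of that proposition together with separability of $\P_n$.
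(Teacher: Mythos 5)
Your high-level strategy (dilate, lift, compress, then invoke Proposition \ref{LP from F}) is the right one, and you correctly identify the two danger points; but the concrete plan you propose for the dilation step fails at both of them. First, the claim that ``a unitary in a quotient lifts to a unitary'' is false: the image of the unilateral shift in the Calkin algebra is a unitary with no unitary lift (Fredholm index obstruction), and the same $K$-theoretic obstruction applies to lifting a unitary of $M_k(B/J)$ to $M_k(B)$. So even if you could complete $(y_1,\dots,y_n)$ to the first column of a unitary over a matrix amplification of $B/J$, you could not in general lift that unitary, and your $^*$-homomorphism $\sigma$ into $M_k(B)$ never materializes. Unitarity, isometry, and Cuntz-type relations are precisely the kind of relations that are \emph{not} liftable; the relation that \emph{is} liftable here is the norm inequality $\|\sum y_i^*y_i\|\leq 1$ (column contraction), and your proposal never uses it.

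Second, performing the dilation \emph{downstairs} in $B/J$ and only then lifting makes the final compression fail. If $z_j\in M_2(B/J)$ dilates $y_j$ with $y_j$ in the $(1,1)$ corner, then for a product one gets, e.g., $(z_i^*z_j)_{11}=y_i^*y_j+(z_i)_{21}^*(z_j)_{21}$, where the defect entries $(z_j)_{21}$ are genuine nonzero elements of $B/J$; hence $q\circ(\text{compression})\circ\sigma$ agrees with $x_j\mapsto y_j$ on generators but not on monomials, so it is not a lift of the given $^*$-homomorphism. The paper's proof avoids both problems by reversing the order: it first lifts $x_1,\dots,x_n$ to $y_1,\dots,y_n\in \Cstar(\F_\infty)$ satisfying only $\|\sum y_i^*y_i\|\leq 1$ (a liftable relation), and then dilates \emph{upstairs} in $M_2(\Cstar(\F_\infty))$ using the defect $d=(1-\sum y_i^*y_i)^{1/2}$, which lies in $\ker\pi$ because the exact relation holds downstairs. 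Consequently every cross term produced by compressing a monomial lies in $\ker\pi$ (this is the inductive verification in the paper), and $\pi\circ\rho\circ\phi=\id_{\P_n}$. To repair your argument you would need to replace ``complete to a unitary and lift the unitary'' by ``lift the column-contraction relation and absorb the resulting defect, which lives in the ideal, by a dilation upstairs.''
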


\begin{proof}
Let $\pi:\Cstar (\F_\infty)\to P_n$ be a surjection. By Proposition \ref{LP from F}, it suffices to lift $\id_{\P_n}$ to a ucp map into $\Cstar (\F_\infty)$. Since $\sum x_i^*x_i=1$, in particular, we have that $\|\sum x_i^*x_i\|\leq 1$, which is a liftable relation. (This is usually attributed to folklore/functional calculus; for a proof of a more general fact, see \cite[Theorem 2.3]{LS12}.) Let $y_1,...,y_n\in \Cstar (\F_\infty)$ be lifts of $x_1,...,x_n$ satisfying $\|\sum y_i^*y_i\|\leq 1$, and 
let $d=(1-\sum y_i^*y_i)^{1/2}\in \Cstar (\F_\infty)$. Now, define $\y_1,...,\y_n\in \mathbb{M}_2(\Cstar (\F_\infty))$ by 
$$\y_1=\begin{pmatrix} y_1 & 0\\ d & 0\end{pmatrix}, \hspace{.25 cm} \y_2=\begin{pmatrix} y_2 & 0\\ 0 & 1\end{pmatrix}, \hspace{.25 cm} \text{and} \hspace{.25 cm} \y_i=\begin{pmatrix} y_i & 0 \\ 0 & 0\end{pmatrix}$$ for each $3\leq i\leq n$. 
Then, we have 
\begin{align*}
    \textstyle{\sum}_{i=1}^n \y_i^*\y_i&= \begin{pmatrix} y_1^*y_1+ d^2 & 0 \\ 0 & 0\end{pmatrix}+\begin{pmatrix} y_2^*y_2 & 0\\ 0 & 1\end{pmatrix}+\begin{pmatrix}\textstyle{\sum}_{i=3}^n y_i^*y_i & 0 \\ 0 & 0 \end{pmatrix}=\begin{pmatrix} 1 & 0 \\ 0 & 1\end{pmatrix},
\end{align*}
and so the map $x_i\mapsto \y_i$ induces a $^*$-homomorphism $\phi:P_n\to \Cstar (\y_1,...,\y_n)\in \mathbb{M}_2(\Cstar (\F_\infty))$. Let $\rho:\mathbb{M}_2(\Cstar (\F_\infty))\to \Cstar (\F_\infty)$ be the compression onto the (1,1) coordinate. It remains to show that $\id_{P_n}=\pi\rho\phi$. \footnote{It may be tempting to conclude this from the fact that $\pi\rho\phi(x_i)=x_i$ for each $i$, but remember that $\pi\rho\phi$ need not be a $^*$-homomorphism.} 

We write ${\bf x, y, \y}$ for the respective $n$-tuples.
Since all the maps are bounded and linear, it suffices to check that for a $^*$-monomial $q$ in $n$ variables 
\begin{equation}\label{q}
    \pi\rho\phi(q({\bf x}))=(\pi\rho)(q({\bf \y}))=\pi(q({\bf y}))=q(\pi(y_1),...,\pi(y_n))=q({\bf x}).
\end{equation}
The second equality is the only one that we need to verify, and it will follow from showing that $\rho(q({\bf \y}))\in q({\bf y})+\ker(\pi)$ for any $^*$-monomial $q$ in $n$ variables. We do so inductively. 

Clearly this holds for each $\y_1,...,\y_n,\y_1^*,...\y_n^*,$ and $1$. So, suppose $q$ is a $^*$-monomial for which $(\pi\rho)(q({\bf \y}))=\pi(q({\bf y}))$. We check that this still holds for $\y_i q({\bf \y}), \y_i^*q({\bf \y}), q({\bf \y})\y_i,$ and $q({\bf \y})\y_i^*$ for each $1\leq i\leq n$. Let $r\in \ker(\pi)$ so that $\rho(q({\bf \y}))=q({\bf y})+r$, and write $$q({\bf \y})=\begin{pmatrix} q({\bf y})+r & s_{12}\\ s_{11} & s_{22}\end{pmatrix}$$ for some $s_{12}, s_{21}, s_{22}\in \Cstar (\F_\infty)$. Then we compute 
\begin{align*}
        \rho(\y_i q({\bf \y}))&=y_iq({\bf y})+y_ir,& \rho(q({\bf \y})\y_i)&=q(\y)y_i+ry_i\\ 
    \rho(\y_i^*q({\bf \y}))&=y_i^*q({\bf y})+y_i^*r,& \rho(q({\bf \y})\y_i^*)&=q(\y)y_i^*+ ry_i^*,
\end{align*}
for $2\leq i\leq n$, and 
\begin{align*}
    \rho(\y_1 q({\bf \y}))&=y_1q({\bf y})+y_1r,& \rho(q({\bf \y})\y_1)&=q(\y)y_1+ry_1+s_{12}d\\  
    \rho(\y_1^*q({\bf \y}))&=y_1^*q({\bf y})+y_1^*r+ds_{21},& \rho(q({\bf \y})\y_1^*)&=q(\y)y_1^*+ ry_1^*.\\
\end{align*} 
Since $r,d\in \text{ker}(\pi)$, equation (\ref{q}) follows, and we have $\id_{P_n}=\pi\rho\phi$ as desired.
\end{proof}

In particular, any $^*$-homomorphism from $\P_n$ into the Calkin algebra lifts to a cpc map, and so the semigroup of extensions of $\P_n$ by $\K$ is also a group. 

\begin{corollary}\label{Ext}
For each $n>1$, the semigroup $Ext(\P_n)$ is a group. 
\end{corollary}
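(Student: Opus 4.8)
The plan is to use the standard Brown--Douglas--Fillmore framework, in which the semigroup $\mathrm{Ext}(A)$ of a separable unital $\Cstar$-algebra $A$ (equivalently, of unitary equivalence classes of unital extensions $0 \to \K \to E \to A \to 0$, or equivalently of unital $^*$-homomorphisms $A \to Q(\K)$ modulo unitary equivalence and stabilization) is a group precisely when every such extension is \emph{invertible}, and that this holds as soon as every unital $^*$-homomorphism $\tau : A \to Q(\K)$ lifts to a unital \emph{completely positive} map $A \to \mathcal{B}(\ell^2)$. This last criterion is exactly what the Choi--Effros/Arveson theory gives us: the existence of a ucp (or just cpc) lift of $\tau$ through the Calkin quotient $\pi_{\mathrm{Calkin}} : \mathcal{B}(\ell^2) \to Q(\K)$ makes the extension semi-invertible, and by Arveson's theorem this suffices for $\mathrm{Ext}(A)$ to be a group. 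So the whole content reduces to: every $^*$-homomorphism $\P_n \to Q(\K)$ lifts to a cpc map $\P_n \to \mathcal{B}(\ell^2)$.

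First I would invoke Proposition \ref{Pn has LP}, which says $\P_n$ has the LP. By Definition \ref{defLLP}, the LP means that for \emph{every} $\Cstar$-algebra $B$ with closed two-sided ideal $J$, every cpc map $\P_n \to B/J$ is liftable to a cpc map $\P_n \to B$. Applying this to $B = \mathcal{B}(\ell^2)$, $J = \K$, so that $B/J = Q(\K)$ is the Calkin algebra, we conclude that every cpc map $\P_n \to Q(\K)$ — in particular every unital $^*$-homomorphism $\P_n \to Q(\K)$ — lifts to a cpc map $\P_n \to \mathcal{B}(\ell^2)$. Thus every extension of $\P_n$ by $\K$ is (semi-)invertible.

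Next I would record the standard conclusion: the Busby picture identifies $\mathrm{Ext}(\P_n)$ with (a quotient of) the set of unital $^*$-homomorphisms $\P_n \to Q(\K)$, with Cuntz-type (block-diagonal) addition; the zero element is the class of extensions that are split, or more generally that admit a ucp lifting. Since every element now admits such a lifting, every element is the zero element of the appropriate quotient — more precisely, the semi-invertibility of all extensions is exactly Arveson's criterion for the Ext semigroup to be a group (see e.g. the discussion in \cite[Chapter 15]{BO} or Arveson's original argument). Hence $\mathrm{Ext}(\P_n)$ is a group for every $n > 1$. For $n = 1$, $\P_1$ is the Toeplitz algebra, which is nuclear and so has the LP by the Choi--Effros lifting theorem, so the same conclusion holds, recovering the classical fact that $\mathrm{Ext}$ of the Toeplitz algebra (equivalently of $C(\mathbb{T})$) is a group.

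The only mild subtlety — the step I'd be most careful about — is the passage from "LP" (cpc lifting) to "$\mathrm{Ext}$ is a group": one must make sure to use the version of Arveson's theorem that only requires a \emph{completely positive} lifting (not a $^*$-homomorphism lifting, which generically fails, cf. the projection obstruction mentioned in Section \ref{Cond Proj and LLP}), and that one is working with unital extensions of the unital separable $\Cstar$-algebra $\P_n$ so that the semigroup structure and the reduction to the Calkin algebra are the standard ones. Once that bookkeeping is in place, the corollary is immediate from Proposition \ref{Pn has LP}.
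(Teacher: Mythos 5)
Your argument is correct and is essentially the paper's own: the paper deduces the corollary from Proposition \ref{Pn has LP} by noting that the LP makes every $^*$-homomorphism $\P_n\to Q(\K)$ cpc-liftable, and then invoking the standard Arveson criterion that semi-invertibility of all extensions makes $\mathrm{Ext}(\P_n)$ a group. No substantive difference from the paper's route.
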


\section{$\Cstar$-algebras which characterize Lance's WEP}\label{CEP}
In \cite{Kir93}, Kirchberg established the following tensorial duality between the LLP and Lance's WEP. 

\begin{proposition}[{{\cite[Proposition 1.1]{Kir94}}}]\label{Kir1.1}
Let $\F$ be any nonabelian free group and $\H$ an infinite dimensional Hilbert space. For $\Cstar$-algebras $A$ and $B$, we have the following.
\begin{enumerate}
    \item If $A$ has the LLP and $B$ has the WEP, then $A\Max B= A\Min B$.
    \item $B$ has the WEP if and only if $\Cstar (\F)\Max B= \Cstar (\F)\Min B$.
    \item $A$ has the LLP if and only if $A\Max \mathcal{B}(\H)=A\Min \mathcal{B}(\H)$.
\end{enumerate}
\end{proposition}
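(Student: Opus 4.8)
The plan is to treat this as essentially a citation/assembly result rather than a from-scratch proof, since each of the three statements is a known theorem of Kirchberg (\cite[Proposition 1.1]{Kir94}), and the purpose of stating it here is to collect the precise form we need for Section \ref{CEP}. That said, let me sketch how the three parts fit together and how one would argue them, since (iii) in particular is the one that the present article interacts with via Proposition \ref{Ozawa} and the discussion around $\mathcal{B}(\H)$ having the WEP.

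First I would dispose of (i) and (ii) together. The forward direction of (i) is the fundamental tensorial consequence of having the LLP on one side and the WEP on the other: the LLP of $A$ means one can locally lift ucp maps out of $A$, the WEP of $B$ embeds $B$ (weakly) into a von Neumann algebra $B^{**} \subset \mathcal{B}(\H)$ with a ucp projection, and one checks that the identity map on the algebraic tensor product $A \odot B$ is min–max continuous by factoring through $A \Min \mathcal{B}(\H)$ and using that $\mathcal{B}(\H)$ is injective (so $A \Min \mathcal{B}(\H) = A \Max \mathcal{B}(\H)$ for \emph{any} $A$) together with a slice-map/local argument to push the equality down to $A \otimes B$. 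For (ii): the ``if'' direction is the genuine content and follows because $\Cstar(\F)$ has the LLP (Lemma \ref{ArvesonLP}), so $\Cstar(\F) \Max B = \Cstar(\F) \Min B$ would be implied by $B$ having the WEP via (i) — wait, that's the wrong direction; the point of (ii) is that $\Cstar(\F)$ is \emph{universal enough} among LLP algebras that the tensorial equality with $\Cstar(\F)$ \emph{forces} the WEP of $B$. One proves this by writing an arbitrary $A$ as a quotient of some $\Cstar(\F)$ and using that $\Max$ is exact in the appropriate sense while $\Min$ is not, so the equality for $\Cstar(\F)$ propagates to $A \Max B = A \Min B$ for all $A$; taking $A = B^{op}$-type comparisons or invoking Lance's original characterization of the WEP then closes it. The ``only if'' direction of (ii) is just the special case $A = \Cstar(\F)$ of (i), since $\Cstar(\F)$ has the LLP.

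Then (iii) is the symmetric statement with the roles reversed: $\mathcal{B}(\H)$ has the WEP (as noted in the excerpt, \cite[Corollary 1.2]{Kir94}), so the ``only if'' direction of (iii) is the special case $B = \mathcal{B}(\H)$ of (i). The ``if'' direction — that $A \Max \mathcal{B}(\H) = A \Min \mathcal{B}(\H)$ forces $A$ to have the LLP — is the dual counterpart of the hard direction of (ii), and I expect this to be the main obstacle. The argument: suppose the tensorial equality holds; given a finite-dimensional operator system $E \subset A$ and a ucp map $\phi: A \to B/J$ into a quotient, one wants to locally lift. One realizes $B/J \hookrightarrow \mathcal{B}(\H)$ and uses Arveson's extension theorem to extend the restriction of $\phi$ to $E$ to a ucp map $A \to \mathcal{B}(\H)$; the tensorial equality $A \Max \mathcal{B}(\H) = A \Min \mathcal{B}(\H)$ is then exactly what is needed to show that this extension, composed with the relevant maps, can be perturbed/selected so that it lands (locally) in $B$ after factoring through $M(J)$-type algebras — this is where Proposition \ref{Ozawa}-style reasoning with the Calkin algebra enters. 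Since all of this is carried out in detail in \cite{Kir94} (and re-exposited in \cite[Chapter 13]{BO}), the honest ``proof'' in the paper is simply to cite those sources; the paragraphs above indicate why the statement is true and which implication carries the weight.
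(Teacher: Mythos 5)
The paper offers no proof of this proposition at all---it is stated purely as a citation of Kirchberg---so your bottom line (cite \cite{Kir94} and the exposition in \cite{BO}) is exactly what the paper does. However, your sketch of part (i) contains a step that is not merely a gap but is false and contradicts the proposition itself: you assert that injectivity of $\mathcal{B}(\H)$ gives $A\Max \mathcal{B}(\H)=A\Min \mathcal{B}(\H)$ for \emph{any} $A$. If that were true, part (iii) would say that every $\Cstar$-algebra has the LLP, and the Junge--Pisier theorem cited in the introduction of this paper is precisely the statement that $\mathcal{B}(\H)\Max\mathcal{B}(\H)\neq\mathcal{B}(\H)\Min\mathcal{B}(\H)$, i.e.\ the equality fails already for $A=\mathcal{B}(\H)$. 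Injectivity of $\mathcal{B}(\H)$ buys you the WEP of that one tensor factor, not a unique $\Cstar$-norm against arbitrary partners. The logic of (i) runs in the opposite direction from what you wrote: it is the LLP of $A$ (via the hard implication of (iii), or directly via local liftability together with Kirchberg's theorem that $\Cstar(\F)\Max\mathcal{B}(\H)=\Cstar(\F)\Min\mathcal{B}(\H)$) that yields $A\Max\mathcal{B}(\H)=A\Min\mathcal{B}(\H)$, after which the weak expectation $\mathcal{B}(\H)\to B^{**}$ pushes the equality down to $A\otimes B$.

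Your sketch of the ``if'' direction of (ii) is also off: the min tensor product is the injective one and the max tensor product is the one compatible with quotients, so an equality $\Cstar(\F)\Max B=\Cstar(\F)\Min B$ does not ``propagate'' to arbitrary quotients $A$ of $\Cstar(\F)$---if it did, (ii) combined with that propagation would say the WEP of $B$ forces $A\Max B=A\Min B$ for \emph{all} $A$, which is false. The actual argument is ``The Trick'': represent $B\subset\mathcal{B}(\H)$, map $\Cstar(\F)$ onto a weakly dense subalgebra of the commutant $B'$, and use min-continuity of the product map $\Cstar(\F)\odot B\to\mathcal{B}(\H)$ to manufacture a weak expectation $\mathcal{B}(\H)\to B^{**}$. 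None of this affects the paper, which correctly treats the proposition as a black box, but the sketch as written would not assemble into a proof.
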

The reader will recognize the second item as our definition for the WEP in Section \ref{preliminaries}. In fact, a $\Cstar$-algebra $A$ is said to {\it characterize the WEP} (\cite{FKPT}) if for any $\Cstar$-algebra $B$, $B$ has the WEP if and only if $A\Max B=A\Min B$. By Proposition \ref{Kir1.1}, $\Cstar (\F)$ characterizes the WEP for any nonabelian free group $\F$. It turns out that several other universal $\Cstar$-algebras also characterize the WEP by virtue of having the LLP and containing a relative weakly injective copy of $\Cstar (\F_2)$. 

\begin{definition}\label{rwi}
For a $\Cstar$-algebra $B$ with $\Cstar$-subalgebra $A$, we say $A$ embeds {\it relatively weakly injectively} into $B$ if for any $\Cstar$-algebra $C$, 
$$A\Max C\subset B\Max C.$$
\end{definition}

If a $\Cstar$-algebra $A$ has the LLP, then for any $\Cstar$-algebra $B$ with the WEP, it follows from Proposition \ref{Kir1.1} that $A\Max B= A\Min B$. On the other hand, suppose $\Cstar (\F_2)$ embeds relatively weakly injectively into $A$, and $A\Max B=A\Min B$ for some $\Cstar$-algebra $B$. Then the canonical surjection $\Cstar (\F_2)\Max B\to \Cstar (\F_2)\Min B$ is injective, and Proposition \ref{Kir1.1} and the following diagram imply that $B$ has the WEP. 
\[
\begin{tikzcd}
\Cstar (\F_2)\Max B \arrow[two heads]{d} \arrow[r,phantom, "\rotatebox{0}{$\subseteq$}", description] & A\Max B\arrow[d,phantom, "\rotatebox{90}{$=$}", description]\\
\Cstar (\F_2)\Min B \arrow[r,phantom, "\rotatebox{0}{$\subseteq$}", description] & A\Min B\\
\end{tikzcd}
\]
 Hence, a $\Cstar$-algebra characterizes the WEP if it has the LLP and contains a relatively weakly injective copy of $\Cstar (\F_2)$. 
As for the converse, it follows readily from Proposition \ref{Kir1.1} that any $\Cstar$-algebra that characterizes the WEP has the LLP (by virtue of having a unique $\Cstar$-tensor product with $\mathcal{B}(\H)$). However, it is unknown whether any $\Cstar$-algebra that characterizes the WEP must contain a copy of $\Cstar (\F_2)$. Some interesting partial results were given in \cite{FKPT}. 

Because the universal unital contraction algebra $\A$ is projective and singly generated, it is often easier to find a relatively weakly injective copy of $\A$ in a given $\Cstar$-algebra than it is to find such a copy of $\Cstar (\F_2)$. 
In \cite[Theorem 6.10]{CShe}, the authors show that $\Cstar (\F_2)$ embeds relatively weakly injectively into $\A$. 
Since embedding relatively weakly injectively is really just a statement about norms on linear combinations of simple tensors, the relation is clearly transitive. Therefore, if $\A$ embeds relatively weakly injectively into a given $\Cstar$-algebra, then so does $\Cstar (\F_2)$. Hence, any $\Cstar$-algebra that has the LLP and contains a relatively weakly injective copy of $\A$ also characterizes the WEP.
With this observation, we can easily list new examples of $\Cstar$-algebras that characterize the WEP. 

\begin{example}\label{example} The following $\Cstar$-algebras characterize the WEP.

\noindent (1)\ The Pythagorean $\Cstar$-algebras $\P_n$. 
    
    For each $n>1$, $\P_n$ has the LLP by Proposition \ref{Pn has LP}. Let $x$ denote the generator of $\A$ and $x_1,...,x_n$ the generators of $\P_n$. Since $\A$ is a quotient of $\P_n$ via the map $x_1\mapsto x$, $x_2\mapsto \sqrt{1-x^*x}$ and $x_n\mapsto 0$, the projectivity of $\A$ gives an embedding of $\A$ into $\P_n$.
In other words, there is a conditional expectation (actually even a $^*$-homomorphism) from $\P_n$ onto this copy of $\A$ in $\P_n$. 
By \cite{Lan73}, this implies $\A$ embeds relatively weakly injectively into $\P_n$.
\vspace{.5 cm}

\noindent(2)\ The universal (unital) $\Cstar$-algebra generated by an $n$-row contraction,
\begin{equation*}
\mathcal{R}_n:=\Cstar _u\langle y_1,...,y_n: \|\textstyle{\sum} y_iy_i^*\|\leq 1\rangle,\end{equation*} 
as studied in \cite{CShe}. 

For $n\geq 1$, we know that $\mathcal{R}_n$ is projective (this is also usually attributed to folklore/functional calculus, see \cite[Theorem 2.3]{LS12}), and hence it has the LP by Lemma \ref{LP from F}. As with $\P_n$, since it surjects onto the projective $\Cstar$-algebra $\A$, it follows that $\A$ embeds relatively injectively into $\mathcal{R}_n$.
\vspace{.5 cm}

\noindent(3)\ The universal $\Cstar$-algebra generated by a partial isometry, $$\P:=\Cstar \langle x: xx^*x=x\rangle,$$ as studied in \cite{BN12}.

In \cite{BN12}, the authors show that $\A$ is isomorphic to a (full) hereditary subalgebra of $\P$. It follows from Brown's stable isomorphism theorem (\cite[Theorem 2.8]{Bro77}) that $\P$ and $\A$ are stably isomorphic. Since the LLP is preserved under tensor products with nuclear $\Cstar$-algebras (\cite{Kir93}), it follows that $\P$ has the LLP. Since hereditary subalgebras embed relatively weakly injectively (see \cite[Corollary 3.6.4]{BO}), we also have that $\A$ embeds relatively weakly injectively into $\P$. 

Note that since $\P$ has the LLP, it follows that Ext$(\P)$ is a group, just as it did for $\P_n$ in Corollary \ref{Ext}. 
\end{example}

We end by remarking on a clear connection with Connes' Embedding Problem. If any one $\Cstar$-algebra that characterizes the WEP {\it has} the WEP, then {\it all} $\Cstar$-algebras that characterize the WEP have the WEP. Indeed, suppose $A$ and $B$ both characterize the WEP, and $A$ has the WEP. Then, since $B$ has the LLP, $A\Max B=A\Min B$ by Proposition \ref{Kir1.1}, which then would imply that $B$ has the WEP.  
By a striking and well-known result of Kirchberg (\cite[Proposition 8.1]{Kir93}), $\Cstar (\F_\infty)$ has the WEP if and only if Connes' Embedding Problem has a positive solution. 

A negative solution to Connes' Embedding Problem has very recently been announced in \cite{JNVWY}, and, if correct, would imply that none of the $\Cstar$-algebras from Example \ref{example} have the WEP.


\providecommand{\bysame}{\leavevmode\hbox to3em{\hrulefill}\thinspace}
\providecommand{\MR}{\relax\ifhmode\unskip\space\fi MR }
\providecommand{\MRhref}[2]{%
  \href{http://www.ams.org/mathscinet-getitem?mr=#1}{#2}
}
\providecommand{\href}[2]{#2}

\end{document}